\documentclass[reqno]{amsart}

\usepackage{graphicx}
\usepackage[usenames,dvipsnames,svgnames,table]{xcolor}

\usepackage[charter]{mathdesign}

\usepackage{tikz}
\usetikzlibrary{backgrounds}
\usetikzlibrary{patterns,fadings}
\usetikzlibrary{arrows,decorations.pathmorphing}
\usetikzlibrary{calc}
\definecolor{light-gray}{gray}{0.95}
\usepackage{float}
\usepackage[colorlinks=true,linkcolor=blue,citecolor=magenta]{hyperref}

%%%NEWTHEOREMS
\newtheorem{theorem}{Theorem}[section]
\newtheorem{lemma}[theorem]{Lemma}
\newtheorem{proposition}[theorem]{Proposition}

\newtheorem{remark}[theorem]{Remark}
\newtheorem{definition}{Definition}[section]

%%%NUMBERING
\numberwithin{equation}{section}

%%%NEWCOMMANDS

\newcommand{\<}{\langle}
\renewcommand{\>}{\rangle}

\renewcommand{\epsilon}{\varepsilon}

\def\centerarc[#1](#2)(#3:#4:#5){\draw[#1] ($(#2)+({#5*cos(#3)},{#5*sin(#3)})$) arc (#3:#4:#5);}

%%%%%%%%%%%% to indent the contents table %%%%%%%%%%%%%%%%%%%%%%%%%%%%
\let\oldtocsection=\tocsection
\let\oldtocsubsection=\tocsubsection
\let\oldtocsubsubsection=\tocsubsubsection
\renewcommand{\tocsection}[2]{\hspace{0em}\oldtocsection{#1}{#2}}
\renewcommand{\tocsubsection}[2]{\hspace{1em}\oldtocsubsection{#1}{#2}}
\renewcommand{\tocsubsubsection}[2]{\hspace{2em}\oldtocsubsubsection{#1}{#2}}
\DeclareRobustCommand{\SkipTocEntry}[5]{}
%%%%%%%%%%%%%%%%%%%%%%%%%%%%%%%%%%%%%%%%%%%%%%%%%%%%%%%%%%%%%%%%%%%%%%%%%%%%%

\newcommand{\mc}[1]{{\mathcal #1}}

\newcommand{\bb}[1]{{\mathbb #1}}

\newcommand{\cadlag}{c\`adl\`ag~}

\DeclareMathOperator{\sgn}{sgn}

\title{}
\date{}
\author{}

\begin{document}

\title[SBE from long range exclusion interactions]{Stochastic Burgers equation from long range \\exclusion interactions}

\author{Patr\'{\i}cia Gon\c{c}alves}
\address{\noindent Center for Mathematical Analysis,  Geometry and Dynamical Systems \\
Instituto Superior T\'ecnico, Universidade de Lisboa \\
Av. Rovisco Pais, 1049-001 Lisboa, Portugal.}
\email{patricia.goncalves@math.tecnico.ulisboa.pt}

\author{Milton Jara}
\address{Instituto de Matem\'atica Pura e Aplicada, Estrada Dona Castorina 110, 22460-320  Rio de Janeiro, Brazil.}  \email{mjara@impa.br}

\subjclass[2010]{60K35}

\begin{abstract}
We consider one-dimensional  exclusion processes  with long jumps given by a transition probability of the form $p_n(\cdot)=s(\cdot)+\gamma_na(\cdot)$, such that its symmetric part $s(\cdot)$ is irreducible with  finite variance and  its antisymmetric part is absolutely bounded by $s(\cdot).$ We prove that under diffusive time scaling  and strength of asymmetry  $\sqrt n \gamma_n \to_{n\to\infty} b\neq 0$, the equilibrium density fluctuations are given by the unique energy solution of the stochastic Burgers equation. 
\end{abstract}

\maketitle

\section{Introduction}

Scaling limits of one-dimensional, stochastic interface models and their relation with the Kardar-Parisi-Zhang  (KPZ) \cite{KPZ} equation 
\[
\partial_t \mc X_t =  \Delta \mc X_t + \lambda \big(\nabla \mc X_t)^2 +  \xi
\]
have been the subject of intense study in the mathematical physics community during the last few years. In this equation, $\xi$ stands for a standard, space-time white noise. One line of research corresponds to what is known as the {\em weak universality} of the KPZ equation. Roughly speaking, the KPZ equation should arise as the scaling limit of any one-dimensional, non-reversible family of stochastic interface models with local interactions on which there is a natural way to scale down the asymmetry of the model as the scale grows. 

From the point of view of interacting particle systems, it is more natural to look at the derivative of the interface. The stochastic process obtained in this way corresponds to a non-reversible particle system with a {\em conservation law}, most of the times the number of particles of the system. This conservation law is equivalent to the local character of the dynamics in the interface model. The limiting equation will be the stochastic Burgers equation
\begin{equation}
\label{frambuesa}
\partial_t \mc Y_t = \tfrac{1}{2} \sigma^2 \Delta \mc Y_t + \lambda \nabla \mc Y_t^2 + \nu \nabla \xi,
\end{equation}
where $\xi$ is a space-time white noise and $\sigma^2$, $\lambda$, $\nu$ are model-dependent constants. This equation is the one satisfied by the space derivative of the solution of the KPZ equation.

The first mathematically rigorous work showing the convergence of the density fluctuations of an interface growth model is the one of Bertini and Giacomin \cite{BerGia}. The authors considered the weakly asymmetric simple exclusion process (WASEP) and they proved that its density fluctuations converge to the so-called {\em Cole-Hopf} solution of \eqref{frambuesa}. Their result holds for a large class of initial conditions and it was also the first article where a proper mathematically rigorous definition of the KPZ/Burgers equation was given. Interpreted in terms of interface models, in \cite{BerGia} the authors proved that the scaling limit of {\em current fluctuations} in the WASEP is given by the KPZ equation.

The results of \cite{BerGia} heavily rely on the so-called {\em Gartner transform} (see \cite{DitGar,Gar}), which is a discrete version of the Cole-Hopf transform that effectively linearizes the asymmetric simple exclusion process (ASEP). This linearization is a blueprint of the richer integrable structure of the ASEP. This integrable structure was discovered by relating the ASEP to the Bethe Ansatz \cite{Sch}, \cite{TraWid}. 
The theory of MacDonald processes \cite{BorCor} allows to discover various other systems with rich integrable structure, as well as to extend the integrable structure of ASEP \cite{BorCorSas}.

In order to prove convergence of density fluctuations to solutions of \eqref{frambuesa}, there are two well developed strategies, which are very different and complementary in nature. One strategy is to consider a model with enough structure to allow a linearizing transformation, similar to Gartner transform. If this strategy is successful, the original method of Bertini and Giacomin \cite{BerGia} should work, modulo model-dependent technical points. This strategy has been performed by instance in \cite{AmiCorQua}, \cite{DemTsa}, \cite{CorTsa}, \cite{CorSheTsa}, \cite{Lab}. When this strategy works, the underlying integrable structure usually allows to obtain fine details of the solutions of the KPZ equation starting from special initial conditions, as well as to allow very general initial data.

The second strategy relies on the concept of {\em energy solutions} of the KPZ/Burgers equation \eqref{frambuesa}, introduced in \cite{GonJar}. Conditioned on the uniqueness of stationary solutions of this equation, the authors  proved in  \cite{GonJar} convergence of the density fluctuations to this stationary solution for a large class of weakly asymmetric  stochastic models. In particular, no integrability assumptions are made. The convergence of interface fluctuations of the associated growth models is also obtained as a consequence of the convergence of current fluctuations. Uniqueness of stationary energy solutions of the KPZ/Burgers equations as defined in \cite{GubJar} were proved to hold on the torus in \cite{GubPer3}  and on the real line in  \cite{DieGubPer}, closing the gap in the proof of convergence of \cite{GonJar}. In \cite{GonJarSet}, the proof of convergence was generalized to include particle systems without product invariant measures and to include non-stationary initial data for the KPZ/Burgers equation. This strategy has been applied to show convergence to the stationary energy  solution of \eqref{frambuesa} for a variety of models in \cite{BloGonSim}, \cite{GonJarSim}, \cite{GubPer3}, \cite{FraGonSim}, \cite{DieGubPer}.

These two strategies to prove convergence to \eqref{frambuesa} are complementary in the following sense. On one hand, the main drawback of the method based on the Gartner-Cole-Hopf transform is that it only works for models satisfying very specific conditions of algebraic and/or combinatoric type. Therefore, even perturbations vanishing in the limit will prevent the use of this transform. For the method based on energy solutions, the set of assumptions required for the model are basically the same required to  prove convergence of the symmetrized model to the Ornstein-Uhlenbeck equation
\[
\partial_t Y_t = \tfrac{1}{2} \sigma^2 \Delta \mc Y_t + \nu \nabla \xi.
\]
On the other hand, the main drawback of the method based on energy solutions is that convergence can be proved only for {\em almost stationary} initial conditions, in the sense that the initial condition of \eqref{frambuesa} should have bounded entropy with respect to the stationary one.

Let us mention here that a satisfactory theory of solutions of \eqref{frambuesa} has been elusive for a long time. The main difficulty comes from the fact that solutions of the linear part of \eqref{frambuesa} are locally absolutely continuous with respect to spatial white noise, and scaling arguments show that the nonlinear term should not modify the local regularity of the solution. This means that solutions are distribution-valued, and it is far from clear how to properly define the square of the solution appearing in the nonlinearity.
In a pair of breakthrough works \cite{Hai1} and  \cite{Hai2} M. Hairer developed the theory of {\em regularity structures} to deal exactly with this kind of difficulties. In \cite{Hai1} he formulated a notion of solution of \eqref{frambuesa} that gives uniqueness and he proved that this solution coincides with the Cole-Hopf solution introduded in \cite{BerGia}. However, the notion of regularity structures is yet to be successfully adapted to the context of interacting particle systems. The notion of energy solutions, based on martingale problems, is close in spirit to the classical martingale characterization of Markov processes of Stroock and Varadhan \cite{StrVar}, and the method of proof is inspired by the theory of hydrodynamic limits of particle systems. Therefore it fits very well in the context of interacting particle systems.

The exclusion process, as introduced by Spitzer \cite{Spi} (see the classical book of Liggett \cite{Lig} for a detailed exposition of this process) has been one of the most studied interacting particle systems. Despite its simplicity, it is remarkable that many phenomena in non-equilibrium statistical physics can be modeled using the exclusion process.
As can be seen from the list of references already mentioned, the KPZ/Burgers equation is not an exception to this rule. The seminal article \cite{BerGia} derived the KPZ/Burgers equation from the WASEP, and \cite{AmiCorQua} took one step further by computing the probability laws of some observables of interest of the KPZ equation using the convergence of WASEP to the KPZ equation. In \cite{DemTsa}, the proof of \cite{BerGia} was extended to exclusion processes with range at most $3$. Looking at the discussion in \cite{DemTsa}, it becomes more or less clear that their result is optimal in the sense that no exclusion process with range greater or equal to $4$ should be linearized by a properly chosen Gartner transform. The aim of this article is to treat the exclusion process with arbitrary range as a test case for the method of energy solutions. As already pointed out in \cite{DemTsa}, \cite{GonJarSet}, \cite{GonJarJSP}, the parameters in \eqref{frambuesa} depend on the first moment of the jump rate of the exclusion process and on the second moment of the symmetric part of this jump rate. Therefore, it is reasonable to consider a weakly asymmetric jump rate of the form
\begin{equation}
\label{platano}
p_n(z) = s(z) + \gamma_n a(z),
\end{equation}
where $s(\cdot)$ is a symmetric jump rate in $\bb Z$, $\gamma$ satisfies $\gamma_n \sqrt{n} \to b \neq 0$ and $a(\cdot)$ is an antisymmetric function on $\bb Z$, and to assume that 
\begin{equation}
\tag{H1}
\sigma^2 = \sum_{z \in \bb Z} z^2 s(z) <+\infty.
\end{equation}
In \cite{DemTsa}, it is shown that the Gartner transform linearizes the problem if and only if $s(z)$ has range at most $3$ and $a(\cdot)$ is chosen as a particular function of $s(\cdot)$ (the one-parameter family of rates mentioned in \cite{DemTsa} corresponds to the choice of $b$ here). Since we also want $p_n(\cdot)$ to be non-negative, it is necessary to assume that
\begin{equation}
\tag{H2}
\text{there exists a finite constant $C$ such that } |a(z)| \leq Cs(z) \text{ for any } z \in \bb Z.
\end{equation}
%%%
The main objective of this article is to prove Theorem \ref{t1}, which states that under these two assumptions on $p_n(\cdot)$, plus the irreducibility of $s(\cdot)$, the density fluctuations of the weakly asymmetric exclusion processes associated to $p_n(\cdot)$ starting from the stationary state converge to the stationary energy solution of \eqref{frambuesa}. Notice that \eqref{H1} is necessary to have a well-defined limit on the diffusive time scale, even for {\em symmetric} exclusion processes, and \eqref{H2} is necessary to have a well-defined process. Therefore, we say that our result is {\em optimal} for the exclusion process, and that the weak universality of \eqref{frambuesa} at stationarity is proved for the exclusion process. We point out here that the $s(\cdot)$ and $a(\cdot)$ can have arbitrary, or even infinite range, as long as \eqref{H1} and \eqref{H2} remain in force. For example, the choice $s(z) = \frac{c}{|z|^{1+\beta}}$, $a(z) = \sgn(z) s(z)$  with $\beta>2$, is covered by Theorem \ref{t1}.

Note that the WASEP considered in \cite{BerGia} corresponds to the choice $p_n(1) = \frac{1}{2}$, $p_n(-1) = \frac{1}{2} + \frac{1}{\sqrt n}$. This jump rate does not match \eqref{platano}, since its symmetric part depends on $n$. This difference is irrelevant, since it can be understood as a time change of the process that vanishes in the limit $n \to \infty$. We have chosen \eqref{platano} basically for aesthetic reasons; if instead of \eqref{platano} we take
\[
p_n(z) = s(z) + \gamma_n a(z) \mathbf{1}(z >0), 
\]
assumption \eqref{H1} has to be replaced by
\[
\sum_{z >0} z^2 (s(z)+ |a(z)|) <+\infty,
\]
which we find less pleasant than \eqref{H1}.

The proof of Theorem \ref{t1} follows the method introduced in \cite{GonJar}. The main difficulty in this method is to obtain a {\em second-order Boltzmann-Gibbs principle}, which improves the classical Boltzmann-Gibbs principle (see Chapter 11 of \cite{KipLan} for a discussion on the Boltzmann-Gibbs principle and further references) from the theory of hydrodynamic limits in two ways. First it includes second-order terms and second it gives quantitative bounds on the error terms which are sharp up to constants. For the exclusion process considered in this article, since we are not assuming that $p_n(\cdot)$ has finite range, we have to deal with non-local functions at the level of the Boltzmann-Gibbs principle. This is the main point where new arguments are needed. In fact, the microscopic currents are non-local. A na\"ive cut-off of large jumps is not enough. In Section \ref{trenzinho} we use an integration-by-parts formula to first replace the non-local currents of the system by local terms. This replacement is inspired on Proposition 5 in \cite{GonJarSim}. After this replacement is performed, the proof follows like in \cite{GonJar}.

Here follows an outline of this paper. In Section \ref{s2} we introduce the model and we state our main result, namely Theorem \ref{t1}. In Section \ref{s3} we characterize the limiting points of the sequence of density fluctuation fields   and in Section \ref{tightness} we prove the existence of such limit points by showing that this sequence is tight. In Section \ref{s.proof} we prove Theorem \ref{t1}. In the Appendix we collect some aside results that are needed along the article.

\section{Model and definitions}\label{s2}

\subsection{The exclusion process}

The one-dimensional exclusion process is a Markov process denoted by $\{\eta_t ; t \geq 0\}$ and with space state $\Omega:=\{0,1\}^{\bb Z}$. Its dynamics is defined  in the following way. Let $p: \bb Z \to [0,1]$ be a probability measure which, to avoid degeneracies, satisfies   $p(0)=0$. We call $p(\cdot)$ a {\em transition probability}.  
The value of $\eta_t(x)$ represents the presence or absence of a particle at site $x$ at time $t$, depending on whether $\eta_t(x)=1$ or $\eta_t(x)=0$. The fact that $\eta(x) \in \{0,1\}$ is known as the {\em exclusion rule} which prohibits more than one particle at each site at a given time. We call $\eta_t(x)$ the {\em occupation variables}. Each particle has its own, independent Poissonian clock of rate $1$. Each time a clock rings, the corresponding particle tries to give a jump of a size $z$ chosen according to the transition probability $p(\cdot)$. If the destination site, let us say $x+z$ is empty, then the particle jumps from the departure site $x$; otherwise the particle stays at the site $x$. When the initial number of particles is finite, this process is just a continuous-time Markov chain with bounded rates, and therefore this definition does not pose any problem. When the initial number of particles is infinite, a more careful construction is needed, see Chapters I.3 and VIII of \cite{Lig} for details.
%For each pair $(x,y)$ with $x,y\in\bb Z$ we attach a random clock exponentially distributed with mean $1$. Clocks associated to different pairs are independent. After the ring of a clock of a pair $(x,y)$, if there is a particle at $x$ and no particle at $y$, then the particle jumps from $x$ to $y$ at rate  $p(y-x)$. Conversely, if the site $y$ is occupied and  the site $x$ is empty then the jump occurs from $y$ to $x$ at rate $p(x-y)$. We note that we do not assume here that $p$ is symmetric. Due to the exclusion constraint if both $x$ and $y$ are occupied nothing happens and the clocks restart. 
The dynamics introduced above can be characterized by means of an infinitesimal generator as follows. 
We say that a function $f: \Omega \to \bb R$ is {\em local} if there exists $A \subseteq \bb Z$ finite such that $f$ depends on $\eta$ only through the coordinates $\eta(x)$ with $x\in A$ and in that case we say that the {\em support} of $f$ belongs to $A$. For $f : \Omega \to \bb R$ local we define $Lf: \Omega \to \bb R$ on $\eta \in \Omega$
as
\[
 L f(\eta) = \sum_{x,y \in \bb Z} p(y-x) \eta(x) (1-\eta(y)) \big[ f(\eta^{x,y}) -f(\eta) \big]
\]
where for $x,y \in \bb Z$ and $\eta \in \Omega$  the configuration $\eta^{x,y} \in \Omega$  is defined as
\[
\eta^{x,y}(z) = 
\left\{
\begin{array}{r@{\;;\;}l}
\eta(y) & z =x,\\
\eta(x) & z=y,\\
\eta(z) & z\neq x,y.
\end{array}
\right.
\]
It can be checked that this sum is absolutely convergent since $f$ is local and $p(\cdot)$ is a probability measure. The linear operator $L$ defined in this way turns out to be a pregenerator of a Markov process $\{\eta_t; t \geq 0\}$, see Proposition I.3.2 in \cite{Lig}. 
We call this process the one-dimensional  {\em exclusion process} with transition probability $p(\cdot)$.

For $\rho \in [0,1]$, let $\mu_\rho$ be the Bernoulli product measure in $\Omega$ with density $\rho$. The measures $\{\mu_\rho; \rho \in [0,1]\}$ are invariant under the evolution of $\{\eta_t; t \geq 0\}$. If $p(\cdot)$ is irreducible, then the measures $\{\mu_\rho; \rho \in [0,1]\}$ are also ergodic. Notice that in the dynamics defined above,  particles are neither destroyed nor created, they simply move according to the clocks and the transition probability. This conservation law is reflected in the fact that there are many ergodic measures, (at least) one for each density of particles.

\subsection{Weakly asymmetric exclusion processes}

In this section we introduce a {\em weak asymmetry} in the dynamics that we defined above. For that purpose, let $s: \bb Z \to [0,1]$ be a symmetric transition probability, namely such that $s(z) = s(-z)$ for any $z \in \bb Z$. We assume that
\begin{equation}
\label{H1}
\tag{H1}
s(\cdot) \text{ is irreducible and }  \sigma^2:=\sum_{z \in \bb Z} s(z) z^2  <+\infty.
\end{equation}
In other words, we assume that $s(\cdot)$ has  {\em finite second moment}. Let $a:\bb Z \to \bb R$ be an {\em antisymmetric} function, namely such that $a(z) = - a(-z)$ for any $z \in \bb Z$. We assume that
\begin{equation}
\label{H2}
\tag{H2}
\text{there exists a finite constant $C$ such that } |a(z)| \leq Cs(z) \text{ for any } z \in \bb Z.
\end{equation}

Let $n \in \bb N$ be a scaling parameter and let $-\infty<b<+\infty$ be a non-zero constant. Let $\{\gamma_n; n \in \bb N\}$ be a real-valued sequence such that
\begin{itemize}
\item[i)] $ {\sqrt n} \gamma_n \to b$ as $n \to \infty$, 
\item[ii)] $p_n(z) =: s(z) + \gamma_n a(z)$ is a transition probability.
\end{itemize}

Notice that \eqref{H2} is necessary and sufficient for the existence of such a sequence. Necessity follows from the positivity of $p_n(\cdot)$, and sufficiency follows from taking $\gamma_n = \frac{b}{{\sqrt n}} \mathbf{1}(n \geq \sqrt{Cb})$. Notice as well that \eqref{H1} and \eqref{H2} imply that the series
\begin{equation}\label{asy_average}
m = \sum_{z \in \bb Z} za(z)  
\end{equation}
is absolutely summable. For that purpose notice that 
\[
\sum_{z \in \bb Z} |za(z)|\leq C \sum_{z\in \bb Z}|z| s(z)\leq C\sum_{z\in\bb Z}z^2s(z) <+\infty.  
\]We will also use several times that 
\[
\sum_{z \in \bb Z}z^2 |a(z)|  < +\infty \text{ and, in particular, that } \lim_{z \to \pm\infty} z^2 a(z)  =0.
\]
The {\em weakly asymmetric} exclusion process is the family of processes $\{\eta_t^n;  t \geq 0\}$ generated by 
\[
L_n f(\eta) = n^2 \sum_{x,y \in \bb Z} p_n(y-x) \eta(x) (1-\eta(y)) \big[f(\eta^{x,y})-f(\eta)\big].
\]
In other words, $\{\eta_t^n; t \geq 0\}$  is the exclusion process with transition probability $p_n(\cdot)$ and accelerated by $n^2$. From this acceleration we can already guess that we will be interested in scaling limits of $\{\eta_t^n; t \geq 0\}$ in a {\em diffusive} space-time scaling. Therefore, it makes perfect sense to ask for \eqref{H1}. Once \eqref{H1} is in force, \eqref{H2} is the least we can ask in order to have a well-defined, weakly asymmetric, family of transition probabilities. It is in this sense that we say that 
\eqref{H1} and \eqref{H2} are {\em optimal}.

Notice that the adjoint of $L_n$ with respect to $L^2(\mu_\rho)$ is the generator of the exclusion process with transition probability given at $z\in\bb Z$ by  $p^\ast_n(z) = p_n(-z)$. In particular, the symmetric part of $L_n$ is equal to $n^2 S$, where 
\begin{equation}\label{sym_part_gen}
S f(\eta) = \tfrac{1}{2}\sum_{x,y \in \bb Z} s(y-x) \big[ f(\eta^{x,y})-f(\eta)\big].
\end{equation}

As remarked above, the weakly asymmetric exclusion process is not a process, but rather a family of processes in which the asymmetric part of the transition probability converges to $0$ at speed $n^{-1/2}$.

We  consider the process $\{\eta_t^n; t \geq 0\}$ in a {\em stationary state}, that is, starting from $\mu_\rho$ for some $\rho \in (0,1)$. 
To simplify the exposition we take $\rho =\frac{1}{2}$. For $\rho \neq \frac{1}{2}$, the characteristic velocity of the system is non-zero. The results proven in this article remain in force after performing a Galilean transformation which puts the system in a reference frame on which it has zero characteristic velocity.
We  denote by $\bb P_n$ the law of $\{\eta_t^n; t \geq 0\}$ starting from $\mu_{1/2}$ and by $\bb E_n$ the expectation with respect to $\bb P_n$. We denote by $\<\cdot,\cdot\>$ the inner product in $L^2(\mu_{1/2})$ and for an integrable function $f:\Omega\to\bb R$ we denote by  $\bar f$ its centred version, that is $\bar f:=f -\<f\>$, where  $\<f\>:=\<f,1\>$.

\subsection{The density fluctuation field}

Let $T$ be a finite time horizon.  Let $\mc S(\bb R)$ denote the Schwartz space of test functions  and let $\mc S'(\bb R)$ be the space of distributions, defined as the topological dual of $\mc S(\bb R)$ with respect to the inner-product in $L^2(\bb R)$. Let $\mc D([0,T] ; \mc S'(\bb R))$ (resp.~$\mc C([0,T]; \mc S'(\bb R))$) be the Skorohod space of \cadlag (resp.~continuous) paths in $\mc S'(\bb R)$. For  $n \in \bb N$ we define the process $\{\mc Y_t^n;t \in [0,T]\}$ with trajectories in $\mc D([0,T]; \mc S'(\bb R))$ in the following way. For $t\in  [0,T]$ and $f\in\mc S(\bb R)$,
\[
\mc Y_t^n(f)  = \frac{1}{\sqrt n} \sum_{x \in \bb Z} \bar{\eta}_t^n(x) f \big( \tfrac{x}{n} \big).
\]
The process $\{\mc Y_t^n; t \in [0,T]\}$ is called the  {\em density fluctuation field} associated to the process $\{\eta_t^n; t \in [0,T]\}$.
The main objective of this article is to prove a convergence result for this field. Before stating this result we 
need to define the limiting object $\{\mathcal Y_t; t \in[0,T]\}$.

\subsection{Stationary energy solutions}
\label{Ensol}
In this section we define what it is known as the {\em stationary energy solution} of the stochastic Burgers equation 
\begin{equation}
\label{SBE}
\partial_t \mc Y_t = \tfrac{1}{2}\sigma^2 \Delta \mc Y_t + b m \nabla \mc Y^2_t + \tfrac{1}{2} \sigma \nabla \xi,
\end{equation}
where $\xi$ is a standard space-time white noise, $\sigma^2$ is defined in \eqref{H1}, $b$ is fixed in i) and $m$ is given in \eqref{asy_average}.
 First we introduce the notion of {\em controlled processes}.

%We say that a random distribution $Y$ is a {\em white noise} of variance $\sigma^2$ if for any $f \in \mc S(\bb R)$, $Y(f)$ has a Gaussian law of mean $0$ and variance $\sigma^2 \|f\|_{L^2(\bb R)}^2$.

\begin{definition}[Controlled process \cite{GubPer3}] \label{def:cont_proc}
\quad 

A pair of stochastic processes  $\{(Y_t,A_t); t \in[0,T]\}$ with  trajectories in  $\mc C([0,T]; \mc S'(\bb R))$ is {\em controlled} by the Ornstein-Uhlenbeck equation
\begin{equation}\label{eq:OU}
\partial_t Y_t = \tfrac{1}{2}\sigma^2 \Delta Y_t + \tfrac{\sigma}{ 2} \nabla \xi.
\end{equation}
 if:
\begin{itemize}
\item[i)] For each $t \in [0,T]$, $\mc Y_t$ is a white noise of variance $\frac{1}{4}$,
\item[ii)] for each $f \in \mc S(\bb R)$, the process 
\[
M_t(f) =  Y_t(f) - Y_0(f) - \int_0^t  Y_s(\tfrac{1}{2}\sigma^2  f'') ds -  A_t(f) 
\]
is a Brownian motion of variance $\frac{1}{4} \sigma^2 t\| f'\|_{L^2(\bb R)}^2$,
\item[iii)] for each $f \in \mc S(\bb R)$ the process $\{A_t(f); t \in [0,T]\}$ is a.s.~of  zero quadratic variation,
\item[iv)] for each $T >0$, the pair $\{(Y_{T-t}, -(A_{T-t}-A_T)); t \in [0,T]\}$ satisfies ii).
\end{itemize}
In this case we say that $\{(Y_t,A_t); t \in[0,T]\}$ is a controlled process.
\end{definition}
Above $\| \cdot \|_{L^2(\bb R)}^2$ denotes the usual $L^2$-norm.  Note that if $A_t \equiv 0$, then $\{Y_t; t \in[0,T]\}$ is the stationary solution of the Ornstein-Uhlenbeck equation
\eqref{eq:OU}.
Moreover,  ii) can be understood as a weak formulation of the equation
\[
\partial_t Y_t = \sigma^2 \Delta Y_t + d A_t + \tfrac{1}{ 2} \sigma \nabla \xi.
\]
For controlled processes, it is possible to define the nonlinear term in the stochastic Burgers equation \eqref{SBE} by approximation:

\begin{proposition}
\label{p2}
\quad

Let $\{(Y_t,A_t); t \in[0,T]\}$ be a controlled process and let $\{\iota_\epsilon; \epsilon \in (0,1)\}$ be an approximation of the identity. Then, for any $f \in \mc S(\bb R)$  the limit
\[
 B_t(f) = \lim_{\epsilon \to 0} \int_0^t \int_\bb R \big(Y_s \ast \iota_\epsilon(x)\big)^2 f' (x) dxds
\]
exists and it is independent of the choice of $\iota_\epsilon$. Above $*$ denotes the convolution operator.
\end{proposition}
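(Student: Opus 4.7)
The plan is to show that $B_t^\epsilon(f) := \int_0^t \int_{\bb R} (Y_s \ast \iota_\epsilon(x))^2 f'(x)\,dx\,ds$ is a Cauchy net in $L^2(\bb P)$ as $\epsilon \downarrow 0$, with a limit that is independent of the mollifier. The main ingredient is a Kipnis--Varadhan type energy estimate adapted to controlled processes: for any functional $F$ lying in the second Wiener chaos of the white noise $Y_s$ one has
\[
\bb E\Big[\sup_{0 \le t \le T}\Big|\int_0^t F(Y_s)\,ds\Big|^2\Big] \;\le\; C\,T\,\|F\|_{-1}^2,
\]
where $\|\cdot\|_{-1}$ is the negative Sobolev seminorm naturally associated with the Ornstein--Uhlenbeck generator $\tfrac{1}{2}\sigma^2 \Delta$ in \eqref{eq:OU}. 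This estimate is obtained by the standard It\^o trick: one solves the Poisson equation $\tfrac{1}{2}\sigma^2 \Delta G = F$ at the level of second-chaos functionals (where this can be done explicitly via Hermite expansions) and then applies the forward martingale decomposition (ii) of Definition \ref{def:cont_proc} to $G(Y_t)$. The drift produced in this way is cancelled by subtracting the analogous backward decomposition coming from (iv), while the $A_t$ contribution is absorbed thanks to the zero quadratic variation (iii). The martingale brackets from the forward and backward pieces together yield the Dirichlet form of $G$, which in turn controls $\|F\|_{-1}^2$.

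With this estimate at hand I would apply it to $F = F_\epsilon - F_{\epsilon'}$, where $F_\delta(Y) := \int_{\bb R} f'(x)\,(Y \ast \iota_\delta(x))^2\, dx$. Each $F_\delta$ is a second-chaos functional whose kernel is
\[
K_\delta(y,z) \;=\; \int_{\bb R} f'(x)\,\iota_\delta(x-y)\,\iota_\delta(x-z)\,dx,
\]
and the negative Sobolev norm of $F_\epsilon - F_{\epsilon'}$ reduces to an explicit Gaussian quadratic expression in $K_\epsilon - K_{\epsilon'}$. A short Fourier-side computation then shows that this quantity tends to zero as $\epsilon,\epsilon' \to 0$: although the kernels themselves blow up like $\delta^{-1}$ on the diagonal, this singularity is smoothed out by the inverse Laplacian encoded in $\|\cdot\|_{-1}$, so that the family $\{K_\delta\}$ is Cauchy at this weaker regularity level. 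Combining the Cauchy property with the energy estimate produces the limit $B_t(f)$.

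Independence of the approximation follows by the same argument applied to $B_t^\epsilon(f)-\widetilde B_t^\epsilon(f)$ with $\widetilde B_t^\epsilon$ built from any other approximation $\widetilde\iota_\epsilon$: the associated difference of second-chaos kernels again vanishes in $\|\cdot\|_{-1}$, forcing the two limits to coincide.

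The hard part is the Kipnis--Varadhan estimate itself, because $\{Y_t\}$ is not assumed to be Markov — only the four conditions of Definition \ref{def:cont_proc} are at our disposal. The classical stationary-reversible argument must therefore be replaced by one that exploits the forward decomposition (ii) jointly with its time-reversed counterpart (iv), using (iii) to discard the non-martingale contribution coming from $A_t$. Once this inequality is established, the remainder of the argument is a clean computation in the second Wiener chaos of the white noise.
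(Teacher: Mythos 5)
The paper does not actually prove this proposition — it is quoted from \cite{GubJar}, \cite{GubPer3}, \cite{GubPer2} — and your sketch is essentially the argument given in those references: the forward–backward It\^o trick yields the Kipnis–Varadhan bound $\bb E[\sup_{t\le T}|\int_0^t F(Y_s)\,ds|^2]\leq C\,T\,\|F\|_{-1}^2$ for controlled processes, and the explicit second-chaos/Fourier computation (the inverse of the Ornstein–Uhlenbeck generator contributing a factor $(|\xi_1|^2+|\xi_2|^2)^{-1}$, which is integrable against $|\widehat{f'}(\xi_1+\xi_2)|^2$) shows that the mollified kernels form a Cauchy family in the $\|\cdot\|_{-1}$ norm, uniformly enough to identify a mollifier-independent limit. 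The one imprecision is in the bookkeeping of the $A$-term: condition iii) (zero quadratic variation) is what guarantees that $A$ enters It\^o's formula only through the first-order term $\int_0^t DG(Y_s)\,dA_s$ with no second-order or cross-variation correction, and it is the combination with the time-reversed decomposition from iv) that cancels this first-order term — roughly the opposite of the roles you assign to iii) and iv) — but this does not affect the validity of the approach.
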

This proposition has been proved in \cite{GubJar}, \cite{GubPer3}, \cite{GubPer2}. The process $ B_t(f)$ can be understood as a weak version of $\nabla Y_t^2$.

\begin{definition}[Stationary energy solution of \eqref{SBE}]
\quad

A controlled process $\{(Y_t,A_t); t \in[0,T]\}$ is a {\em stationary energy solution} of the stochastic Burgers equation \eqref{SBE} if 
$
A_t(f) = bm  B_t(f)$
a.s.  for all  $f \in \mc S(\bb R)$ and $t\in[0,T]$.
\end{definition}

\begin{proposition} [Uniqueness of stationary energy solution of the stochastic Burgers equation  \cite{DieGubPer}]
\label{p3}
\quad

Any two stationary energy solutions of the stochastic Burgers equation \eqref{SBE} have the same law. 
\end{proposition}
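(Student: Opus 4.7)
The plan is to reduce uniqueness of stationary energy solutions of \eqref{SBE} to the classical uniqueness of the multiplicative stochastic heat equation, via a Cole--Hopf--type transformation performed directly at the level of the controlled-process formulation. The structural input is that the marginals of $Y_t$ are prescribed (spatial white noise of variance $\tfrac14$ by condition (i)), and that conditions (ii) and (iv) provide two distinct semimartingale decompositions of the same path --- forward and backward in time --- sharing the zero-quadratic-variation drift of (iii). This rigidity is what should pin down the law.

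\textbf{Key steps.} First, I would establish an It\^o-trick / Kipnis--Varadhan inequality in this setting: using (ii) and (iv) together with the Gaussianity of the marginals, any additive functional $\int_0^t F(Y_s)\,ds$ with $F$ lying in a finite Wiener chaos admits an $L^p$ bound controlled by a suitable Gaussian Dirichlet form of $F$. Second, applying this bound to the smoothed nonlinear drift $\int_0^t \int_{\bb R} (Y_s \ast \iota_\epsilon)^2(x)\, f'(x)\, dx\, ds$ would upgrade the convergence in Proposition \ref{p2} to convergence in a path-space topology strong enough to commute with nonlinear operations. Third, I would apply It\^o's formula to cylindrical exponentials of the form $\exp\bigl(\alpha Y_t(\iota_\epsilon \ast f)\bigr)$ with $\alpha$ proportional to $bm/\sigma^2$; a deterministic Wick renormalization should produce, in the limit $\epsilon \to 0$, a weak formulation of a linear multiplicative SHE whose law is uniquely determined (via Feynman--Kac or the Wiener chaos expansion). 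Finally, inverting the exponential transform recovers uniqueness in law for $\{Y_t\}$, and hence for $A_t = bm\, B_t$.

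\textbf{Main obstacle.} The hard part is making the Cole--Hopf transformation rigorous given that $Y_t$ is only distribution-valued with no better regularity than spatial white noise: pointwise squares and exponentials are ill defined, and the It\^o correction for $\exp(\alpha Y_t^\epsilon)$ carries an $\epsilon$-divergent variance that must be absorbed into the Wick renormalization. The ``anomalous drift'' produced by this correction must cancel exactly the nonlinear term $bm\, B_t(f)$ inherited from $A_t$, and establishing this cancellation is precisely where the backward identity (iv) is indispensable: without it, the It\^o bracket read under time reversal would be undetermined and no closed equation for the exponential would emerge. A secondary obstacle specific to $\bb R$ (as opposed to the torus treated in \cite{GubPer3}) is uniform control of tails of $Y_t \ast \iota_\epsilon$ against Schwartz test functions; this is typically handled by Fourier-side weighted estimates, or equivalently by a spatial localization argument with errors that vanish as the localization scale diverges.
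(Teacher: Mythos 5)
The paper does not prove Proposition \ref{p3}: it is imported verbatim from the literature, with the remark that the result was established on the circle in \cite{GubPer2} and extended to the real line in \cite{DieGubPer}. So there is no internal proof to measure your attempt against; the only meaningful comparison is with the cited works. Measured against those, your outline is essentially the correct roadmap of the Gubinelli--Perkowski argument: the forward and backward decompositions (ii) and (iv) yield the It\^o trick, i.e.\ Kipnis--Varadhan-type $L^p$ bounds on additive functionals $\int_0^t F(Y_s)\,ds$ in terms of a Gaussian Dirichlet form of $F$; these bounds control the Wick renormalization of a mollified Cole--Hopf transform; and the renormalized exponential is shown to solve the linear multiplicative stochastic heat equation, whose uniqueness in law is classical. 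You have also correctly identified the two genuine difficulties (the exact cancellation between the It\^o correction and the drift $A_t = bm\,B_t$, and the tail control needed on $\bb R$ rather than the torus).

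That said, what you have written is a plan, not a proof: every key step is announced (``I would establish\dots'', ``should produce\dots'') rather than carried out, and the entire technical content of \cite{GubPer2} lives precisely in those steps. One concrete misstep worth flagging: the Cole--Hopf transform must be applied to the (mollified) \emph{height function} $h_t = \nabla^{-1} Y_t$, i.e.\ to objects of the form $\exp\bigl(\lambda\, (\Theta_\epsilon \ast Y_t)(x)\bigr)$ with $\Theta_\epsilon$ a mollified Heaviside and $\lambda = 2bm/\sigma^2$, which are then tested against $f$. Exponentials of $Y_t(\iota_\epsilon \ast f)$ for a fixed Schwartz function $f$, as written, do not produce the multiplicative structure of the SHE, since that equation is a pointwise equation for the field $w_t(x) = e^{\lambda h_t(x)}$ and not for a single real-valued observable. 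Relatedly, passing from the Burgers-level controlled process to a KPZ-level one (so that $h_t$ and its drift are well defined and conditions (i)--(iv) transfer) is itself a nontrivial step of \cite{GubPer2} that your sketch skips. None of this makes the strategy wrong, but as it stands the proposal reproduces the table of contents of the cited proof rather than the proof itself.
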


This proposition has been proved in \cite{GubPer2} in the circle and the proof can be extended to the real line $\bb R$ without too many changes \cite{DieGubPer}. The main drawback of Proposition \ref{p3} is that only gives uniqueness for the solution of the stochastic Burgers equation with initial datum given by a white noise. In a seminal paper \cite{Hai1}, Martin Hairer introduced a stronger notion of solution of the stochastic Burgers equation, for which he was able to obtain existence and uniqueness starting from fairly general initial data. This result follows the setting of {\em regularity structures}. In \cite{GubPer3} the authors also obtain existence and uniqueness for solutions of the stochastic Burgers equation with general initial data. Their approach is based on the concept of {\em paraproducts}. Both approaches are not very suitable for proving convergence of microscopic particle systems to the stochastic Burgers equation. 

The notion of stationary energy solution of the stochastic Burgers equation can be understood as a {\em martingale problem} for \eqref{SBE}. It was first  introduced in a slightly different way in \cite{GonJar}, where Proposition \ref{p3} was conjectured to be true and where, assuming Proposition \ref{p3}, the next result was proved for a large class of weakly asymmetric interacting  particle systems with finite-range interactions.

\begin{theorem}
\label{t1} 
The process $\{\mc Y_t^n; t \in [0,T]\}$ converges in law,  as $n\to\infty$, with respect to the Skorohod topology of   $\mc D([0,T]; \mc S'(\bb R))$,
to the \emph{stationary energy solution} of the stochastic Burgers equation given in \eqref{SBE}.
\end{theorem}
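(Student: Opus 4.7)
The plan is to follow the energy solution strategy of \cite{GonJar}, with three pillars: tightness of $\{\mc Y^n\}_{n \in \bb N}$ in $\mc D([0,T]; \mc S'(\bb R))$; identification of every subsequential limit as a controlled process $\{(Y_t, A_t); t \in [0,T]\}$ in the sense of Definition \ref{def:cont_proc}; and verification that the control $A_t(f)$ coincides with $bm B_t(f)$ almost surely. Once these are in hand, the uniqueness statement of Proposition \ref{p3} forces the entire sequence to converge to the stationary energy solution of \eqref{SBE}.

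I would begin with Dynkin's formula
\[
\mc Y_t^n(f) = \mc Y_0^n(f) + \int_0^t L_n \mc Y_s^n(f)\,ds + \mc M_t^n(f),
\]
and decompose $L_n$ into its symmetric and antisymmetric parts. The symmetric contribution is, up to an error vanishing under \eqref{H1}, equal to $\mc Y_s^n(\tfrac{1}{2}\sigma^2 f'')$, and the martingale $\mc M^n$ has predictable quadratic variation converging to $\tfrac{1}{4}\sigma^2 t\|f'\|_{L^2(\bb R)}^2$. This immediately gives conditions i), ii), and iv) of Definition \ref{def:cont_proc}: i) from the stationarity of $\mu_{1/2}$; ii) by passing to the limit in Dynkin's formula; iv) by running the same argument on the time-reversed dynamics, whose generator differs from $L_n$ only in the sign of the antisymmetric part. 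Tightness follows from Mitoma's criterion together with uniform $L^2$ bounds on the martingale and on the antisymmetric drift, and limit points are supported on continuous trajectories because individual jumps of $\mc Y^n$ are of size $n^{-1/2}$.

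The heart of the proof is the identification of the residual antisymmetric drift. After antisymmetrization in the jump variable and a Taylor expansion of $f$, this drift takes the schematic form
\[
\mc A_t^n(f) \;\approx\; \sqrt{n}\,\gamma_n \int_0^t \sum_{x,z \in \bb Z} z\,a(z)\, \bar\eta_s^n(x)\,\bar\eta_s^n(x+z)\, f'\!\bigl(\tfrac{x}{n}\bigr)\,ds,
\]
with $\sqrt{n}\gamma_n \to b$ and $\sum_z z a(z) = m$; the linear-in-$\bar\eta$ pieces cancel because we are at density $\rho = 1/2$. Since $a(\cdot)$ may have infinite range, the two-point expression $\bar\eta(x)\bar\eta(x+z)$ is non-local, which is the new difficulty with respect to \cite{GonJar, DemTsa}. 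The strategy carried out in Section \ref{trenzinho} is an integration-by-parts in the spirit of Proposition 5 of \cite{GonJarSim}, telescoping each long bond $\{x, x+z\}$ into $|z|$ nearest-neighbor bonds; \eqref{H2} dominates $|a(z)|$ by $C s(z)$, and the finite second moment in \eqref{H1} makes the accumulated $z^2$-cost finite and uniform in $n$. Once the correlator is reduced to a local expression, the classical second-order Boltzmann-Gibbs principle of \cite{GonJar, GonJarSet} identifies the limit of $\mc A_t^n(f)$ with $bm B_t(f)$ through Proposition \ref{p2}, and simultaneously yields the zero-quadratic-variation condition iii).

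The hard part is exactly the non-local-to-local reduction: each telescoping step contributes an $L^2$ error scaling like $|z|$, so absorbing it into the symmetric Dirichlet form of $S$ requires a weight of order $z^2$ per bond, which is precisely what the combination of \eqref{H1} and \eqref{H2} provides. Every other ingredient---the Kipnis-Varadhan inequality for additive functionals of reversible Markov chains, the equivalence of ensembles, and the passage to the limit defining $B_t(f)$---is already available from \cite{GonJar, GonJarSet} and can be imported with only cosmetic adaptations.
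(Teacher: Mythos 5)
Your proposal follows essentially the same route as the paper: Dynkin decomposition, Mitoma tightness, the Kipnis--Varadhan/moving-particle integration by parts to localize the long-range correlator $\bar\eta(x)\bar\eta(x+z)$ at a cost of $z^2|a(z)|$ controlled by \eqref{H1}--\eqref{H2}, the second-order Boltzmann--Gibbs principle to identify the drift with $bm\,\mc B_t(f)$, and uniqueness via Proposition \ref{p3}. The only point you gloss over is that the $L^2$ bound linear in $t$ on the localization error is not by itself enough for tightness of that term as a process; the paper interpolates it with a crude $t^2 n$ bound to get a Kolmogorov--Centsov modulus $|t-s|^{3/2-\delta}n^{-\delta}$.
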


\begin{remark} We stress that the hypotheses \eqref{H1} and \eqref{H2} are {\em optimal} in the sense that the very same conditions needed to define the approximations and limiting objects appearing in Theorem \ref{t1} are the ones assumed for the transition probability $p_n(\cdot)$.
\end{remark}

Our main goal will be to prove Theorem \ref{t1}.
We will prove convergence of the sequence $\{\mc Y_t^n; t \in [0,T]\}_{n \in \bb N}$ following the classical strategy of convergence in law, namely we will prove tightness of the sequence with respect to the Skorohod topology, then we will show that any limit point is a stationary energy solution of the stochastic Burgers equation, and then we will invoke uniqueness of such solutions to close the proof of convergence.

\section{The martingale problem}\label{s3}

\subsection{ The associated martingales}

As pointed out in Section \ref{Ensol}, the energy formulation of the stochastic Burgers equation given by  \eqref{SBE} can be understood as a martingale problem. Therefore, martingales will play a prominent role in the proof of Theorem \ref{t1}. In this section we will introduce various martingales and we will explore their relation with the density fluctuation field $\mc Y_t^n$.  By Dynkin's formula, for any function $f \in \mc S(\bb R)$, the process
\begin{equation}\label{eq:mart}
\mc M_t^n(f)  =  \mc Y_t^n(f) =\mc Y_0^n(f) - \int_0^t L_n \mc Y_s^n(f) ds
\end{equation} 
is a martingale and its quadratic variation  is equal to
\begin{equation} \label{quad_variation}
\<\mc M_t^n(f)\> = \int_0^t n \sum_{x,y \in \bb Z} p_n(y-x) \eta_s^n(x) \big(1-\eta_s^n(y)\big) ( f\big(\tfrac{y}{n}\big) -f\big(\tfrac{x}{n}\big))^2 ds.
\end{equation}
We have that
\begin{equation*}
\begin{split}
L_n \eta_t^n(x)
	&=n^2\sum_{y\in\bb Z}\Big(p_n(x-y)\eta_t^n(y)(1-\eta_t^n(x))-p_n(y-x)\eta_t^n(x)(1-\eta_t^n(y))\Big)\\
	&= n^2 \sum_{y \in \bb Z} s(y-x) (\eta_t^n(y) -\eta_t^n(x)) - n^2 \gamma_n \sum_{y \in \bb Z} a(y-x) (\eta_t^n(x) - \eta_t^n(y))^2.
\end{split}
\end{equation*}
Let us rewrite $(\eta_t^n(x)-\eta_t^n(y))^2$ in term of centred variables: 
\[
(\eta_t^n(x)-\eta_t^n(y))^2 = \tfrac{1}{2} - 2\bar{\eta}_t^n(x) \bar{ \eta }_t^n(y).
\]
Therefore, we can rewrite the last term  at the right hand side of \eqref{eq:mart} as
\begin{equation*}
\begin{split}
&\int_0^t\frac{1}{\sqrt n}\sum_{x,y\in\bb Z}\bar{\eta}_s^n(x) n^2 s(y-x)\Big( f\big(\tfrac{y}{n}\big) -f\big(\tfrac{x}{n}\big)\Big)\,ds \\
&\quad -\int_0^t\sqrt{n} \gamma_n \sum_{x,y \in \bb Z}\bar{\eta}_s^n(x)\bar{\eta}_s^n(y) a(y-x) n\Big( f\big(\tfrac{y}{n}\big) -f\big(\tfrac{x}{n}\big)\Big)\, ds.
\end{split}
\end{equation*}
%By the definition of $p_n(\cdot)$ in ii) and  since $s(\cdot)$ is symmetric, last expression can be rewritten as 
%\begin{equation*}
%\begin{split}
%&\int_0^t\frac{n^2}{\sqrt n}\sum_{x,y\in\bb Z}\Big( f\big(\tfrac{y}{n}\big) -f\big(\tfrac{x}{n}\big)\Big)s(y-x)\bar{\eta}_s^n(x)\,ds\\
%-&\int_0^t\frac{n^2}{\sqrt n}\sum_{x,y\in\bb Z}\Big( f\big(\tfrac{y}{n}\big) -f\big(\tfrac{x}{n}\big)\Big)\gamma_n a(y-x)\bar{\eta}_s^n(x)\bar{\eta}_s^n(y)\,ds.
%\end{split}
%\end{equation*}
From this we conclude that 
\[
\int_0^t L_n \mc Y_s^n(f) ds = \int_0^t \mc Y_s^n(\mc S_n f)ds  +  \frac{\gamma_n \sqrt n}{b} \mc A_t^n(f),
\]
where
\begin{equation}\label{def_S}
\mc S_n f\big(\tfrac x n\big) = n^2 \sum_{y \in \bb Z} s(y-x) \Big( f\big(\tfrac{y}{n}\big) -f\big(\tfrac{x}{n}\big)\Big),
\end{equation}
and, 
\[
\mc A_t^n(f) = -b \int_0^t \sum_{x,y \in \bb Z} \bar{\eta}_s^n(x)\bar{\eta}_s^n(y) a(y-x) n\Big( f\big(\tfrac{y}{n}\big) -f\big(\tfrac{x}{n}\big)\Big) \,ds.
\]

By Lemma \ref{numer}, $\mc S_n  f(\frac{x}{n})$ is a discrete approximation of $\frac{1}{2}\sigma^2 f''(\frac{x}{n})$. Therefore, our aim will be to show that the identity
\begin{equation}
\label{martdec}
\mc M_t^n(f) = \mc Y_t^n(f) - \mc Y_0^n(f) - \int_0^t \mc Y_s^n(\mc S_n f) ds - \frac{\gamma_n \sqrt n}{b} \mc A_t^n(f)
\end{equation}
is a discrete version of the energy formulation of the stochastic Burgers equation in ii) of Definition \ref{def:cont_proc}.

\subsection{The microscopic non-linear term of stochastic Burgers equation}
In this section we analyze the microscopic process that corresponds to the non-linear term in the stochastic Burgers equation, namely $\mc A_t^n(f)$ which, after a change of variables, can be rewritten as:
\begin{equation}
\label{pera}
\mc A_t^n(f) = -2b\int_0^t \sum_{\substack{x \in \bb Z\\z >0}} \bar{\eta}_s^n(x) \bar{\eta}_s^n(x+z) a(z) n\Big( f\big(\tfrac{x+z}{n}\big) -f\big(\tfrac{x}{n}\big)\Big) ds.
\end{equation}
Notice that the process $\mc A_t^{n}(f)$ can be written as
$
\int_0^t g(\eta_s^n) ds$
for a properly defined function $g \in L^2(\mu_{1/2})$. We call this kind of processes {\em additive functionals} of the process $\{\eta_t^n; t \in [0,T]\}$. 
Our goal in this section is to write the additive functional \eqref{pera} as a function of the density fluctuation field $\mc Y_t^n$. This will be done in two steps. First, we use an integration by parts formula to write the field $\mc A_t^n(f)$ as the sum of another field which is ``more local'' plus an error term which is negligible in $L^2(\bb P_n)$. This ``more local'' field will only involve terms of the form $\bar \eta(x) \bar \eta(x+1)$. Second, we use the second-order Boltzmann-Gibbs principle, to replace the function $\bar\eta(x)\bar\eta(x+1)$ by a function of the density fluctuation field.

\subsubsection{An integration by parts formula}
\label{trenzinho}
In this section we show that we can replace the product $\bar \eta_t^n(x) \bar \eta_t^n(x+z)$  by the nearest-neighbour product $\bar \eta_t^n(x) \bar \eta_t^n(x+1)$ in \eqref{pera}, at the cost of an error that goes to $0$ in $L^2(\bb P_n)$, as $n \to \infty$. This is the content of the next lemma.

\begin{lemma}

\label{estimate_1}

For any $t\in[0,T]$ and any $f\in\mc S(\bb R)$ it holds that
\begin{equation}
\label{manzana}
\bb E_n\Big[ \Big(\int_0^t \sum_{\substack{x \in \bb Z \\ z > 0}} \big\{ \bar{\eta}_s^n(x) \bar{\eta}_s^n(x+z) - \bar{\eta}_s^n(x) \bar{\eta}_s^n(x+1)\big\} a(z) n\Big( f\big( \tfrac{x+z}{n}\big) - f\big( \tfrac{x}{n}\big)\Big) ds \Big)^2 \Big] \leq \frac{C(f) t}{n}.
\end{equation}
\end{lemma}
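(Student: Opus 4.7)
My plan is to apply the Kipnis-Varadhan inequality after rewriting the integrand of \eqref{manzana} as a sum of ``swap gradients'' compatible with the symmetric part of the generator $L_n$. The key algebraic identity, valid for $y \geq 1$, is
\[
\bar\eta(x)\big[\bar\eta(x+y+1) - \bar\eta(x+y)\big] = F_{x, y}(\eta^{x+y, x+y+1}) - F_{x, y}(\eta), \qquad F_{x, y}(\eta) := \bar\eta(x)\bar\eta(x+y),
\]
since swapping sites $x+y$ and $x+y+1$ replaces $\bar\eta(x+y)$ by $\bar\eta(x+y+1)$ and leaves $\bar\eta(x)$ unchanged. Telescoping $\bar\eta(x+z) - \bar\eta(x+1) = \sum_{y=1}^{z-1}(\bar\eta(x+y+1) - \bar\eta(x+y))$ and swapping sums over $z$ and $y$, the integrand of \eqref{manzana} rewrites as
\[
V_n(\eta) := \sum_{x \in \bb Z}\sum_{y \geq 1} W^n_{x, y}(f)\, \big[F_{x, y}(\eta^{x+y, x+y+1}) - F_{x, y}(\eta)\big],
\]
with $W^n_{x, y}(f) := \sum_{z > y} a(z)\, n\big(f(\tfrac{x+z}{n}) - f(\tfrac{x}{n})\big)$.

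Since $\mu_{1/2}$ is invariant for $L_n$, the Kipnis-Varadhan inequality applied to the additive functional $\int_0^t V_n(\eta_s^n)\,ds$ reduces the second moment to an $H^{-1}$-type estimate with respect to the symmetric part $n^2 S$ of $L_n$:
\[
\bb E_n\Big[\Big(\int_0^t V_n(\eta_s^n)\, ds\Big)^2\Big]\; \leq\; Ct\,\sup_{h}\big\{2\<V_n, h\> - n^2\<h, -Sh\>\big\}.
\]
Using $\mu_{1/2}$-invariance of swaps, $\<V_n, h\> = \sum_{x, y \geq 1} W^n_{x, y}(f)\,\<F_{x, y},\, h(\eta^{x+y, x+y+1}) - h(\eta)\>$. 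A double Cauchy-Schwarz---first on the inner product (using $\<F_{x, y}^2\> = 1/16$), then on the index $(x, y)$ with weights $q_y := \sum_{z > y} z|a(z)|$, which is summable under \eqref{H1}+\eqref{H2}---combined with the elementary bound $\sum_u \<(h(\eta^{u, u+1}) - h(\eta))^2\> \leq \frac{2}{s(1)}\<h, -Sh\>$ (and, when $s(1) = 0$, a standard path-decomposition argument based on the irreducibility of $s$), yields
\[
\sup_h\big\{2\<V_n, h\> - n^2\<h, -Sh\>\big\} \;\leq\; \frac{C}{n^2}\sum_{x, y \geq 1} \frac{W^n_{x, y}(f)^2}{q_y}.
\]

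It remains to estimate $\sum_{x, y} W^n_{x, y}(f)^2/q_y$. Taylor expansion gives $n(f(\tfrac{x+z}{n}) - f(\tfrac{x}{n})) = z f'(\tfrac{x}{n}) + O(z^2/n)$, so the main contribution of $W^n_{x, y}(f)$ is $f'(\tfrac{x}{n})\, R(y)$ with $R(y) := \sum_{z > y} z\,a(z)$ and $|R(y)| \leq q_y$. A Riemann-sum estimate $\sum_x f'(\tfrac{x}{n})^2 \leq Cn\|f'\|_{L^2(\bb R)}^2$ then gives $\sum_x W^n_{x, y}(f)^2 \leq Cn\, q_y^2$, so that $\sum_{x, y} W^n_{x, y}(f)^2/q_y \leq Cn\sum_y q_y \leq Cn$ (since $\sum_y q_y = \sum_z z^2|a(z)| < +\infty$). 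Substituting back yields the $H^{-1}$ supremum $\leq C(f)/n$ and hence the bound $C(f)t/n$ in \eqref{manzana}. The main obstacle is the correct weighting $q_y$ in the second Cauchy-Schwarz: the choice $q_y = \sum_{z > y} z|a(z)|$ precisely encodes the decay of $W^n_{x, y}(f)$ in $y$ coming from the finite-variance hypothesis \eqref{H1}+\eqref{H2}, making $\sum_y q_y$ finite while keeping $\sum_{x, y} W^n_{x, y}(f)^2/q_y$ of the correct order $O(n)$.
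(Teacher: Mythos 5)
Your argument is correct, and it reaches the same $H^{-1}$ (Kipnis--Varadhan) bound as the paper by a genuinely different decomposition. The paper keeps the observable $\bar\eta(x)\bar\eta(x+z)-\bar\eta(x)\bar\eta(x+1)$ intact, converts it via the single swap $\eta\mapsto\eta^{x+1,x+z}$ into $\<\bar\eta(x)\bar\eta(x+1),h(\eta^{x+1,x+z})-h(\eta)\>$, and then invokes the moving-particle lemma to pay a factor of order $z^2$ when reducing the long-range Dirichlet term $I_{x+1,x+z}(h)$ to nearest-neighbour ones; the weighted Cauchy--Schwarz there uses weights $\beta(x,z)\propto|\nabla^n_{x,x+z}f|$ and the bound $\sum_{x,z}|a(z)||\nabla^n_{x,x+z}f|^2=O(n)$. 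You instead telescope the observable itself into nearest-neighbour swap gradients of $F_{x,y}$, so only $I_{x+y,x+y+1}(h)$ ever appears and the moving-particle lemma is needed only through the equivalence of Dirichlet forms when $s(1)=0$ (which you correctly flag); the factor $z^2$ reappears in your scheme as $\sum_y q_y=\sum_z z(z-1)|a(z)|$, so both routes consume exactly the hypothesis $\sum_z z^2|a(z)|<\infty$ coming from \eqref{H1}--\eqref{H2}, and both give $C(f)t/n$. The one step you should tighten is the claim $\sum_x W^n_{x,y}(f)^2\leq Cn\,q_y^2$: justifying it by Taylor expansion requires controlling the remainder uniformly in $x$ for arbitrarily large $z$, which is delicate under a second moment only (this is the technical content of Lemma \ref{numer3} in the paper). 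A cleaner route avoiding Taylor altogether is Cauchy--Schwarz in $z$ with weight $z|a(z)|$, namely $W^n_{x,y}(f)^2\leq q_y\sum_{z>y}\tfrac{|a(z)|}{z}\big(\nabla^n_{x,x+z}f\big)^2$, combined with $\sum_x\big(\nabla^n_{x,x+z}f\big)^2\leq nz^2\|f'\|^2_{L^2(\bb R)}$ (itself a Cauchy--Schwarz on $f(\tfrac{x+z}{n})-f(\tfrac{x}{n})=\int f'$), which gives exactly $\sum_xW^n_{x,y}(f)^2\leq n\|f'\|^2_{L^2(\bb R)}q_y^2$. With that substitution your proof is complete and fully rigorous.
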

\begin{proof}
Let us introduce the notation
\begin{equation}
\label{damasco}
\nabla_{x,x+z}^n f:=n \Big( f\big(\tfrac{x+z}{n}\big) -f\big(\tfrac{x}{n}\big)\Big). 
\end{equation}
From Lemma 2.4 of \cite{KomLanOll}, the expectation in \eqref{manzana} is bounded from above by 
\begin{equation}\label{var_form}
C t \!\!\sup_{h \text{ local}} \!\!\Big\{ 2\sum_{\substack{x \in \bb Z \\ z >0}}  a(z) \nabla_{x,x+z}^n f\<\bar{\eta}(x) \bar \eta(x+z) -\bar \eta(x) \bar \eta(x+1) ,h\> -\< h, -L_n h\> \Big\}.
\end{equation}
Using the change of variables $\eta\to \eta^{x+1,x+z}$, under which the measure $\mu_{1/2}$ is invariant,  we can rewrite the first term of the previous expression as
\begin{equation}
\label{zanahoria}
2\sum_{\substack{x \in \bb Z \\ z >0}}  a(z) \nabla_{x,x+z}^n f\big\< \bar \eta (x) \bar \eta (x+1), h(\eta^{x+1,x+z})-h(\eta)\big\>. 
\end{equation}
Now we use the weighted Cauchy-Schwarz estimate for each $x,z$, to get the bound
\[
\begin{split}
\big|\big\< \bar{\eta}(x) \bar{\eta}(x+1), h(\eta^{x+1,x+z})-h(\eta)\big\>\big|
		&\leq \frac{\beta(x,z)}{2} \int \big(\bar \eta (x) \bar \eta (x+1)\big)^2 d\mu_{1/2}\\
		&\quad + \frac{ I_{x+1,x+z}(h)}{2\beta(x,z)}, 
\end{split}
\]
where for $y,y'\in\bb Z$,
\begin{equation}
\label{naranja}
 I_{y,y'}(h) = \int  \big( h(\eta^{y,y'})-h(\eta)\big)^2 d\mu_{1/2}.
\end{equation}
By Lemma \ref{moving} below, for any $x,z\in\bb Z$ with $z>0$,
\begin{equation}
\label{telesc_arg}
I_{x+1,x+z}(h) \leq (4z-7) \sum_{y=x+1}^{x+z-1} I_{y,y+1}(h).
\end{equation}
%\leq z\mc D (h),
%%\]where \[
%%\mc D(h) = \sum_{x \in \bb Z} \mc D_{x,x+1}(h).
%%\]
Let us define the {\em Dirichlet form} $\mc D(h)$ as
\begin{equation}
\label{lima}
\mc D(h) = \frac{1}{2} \sum_{x \in \bb Z} I_{x,x+1}(h).
\end{equation}
Let us choose $\beta(x,z) = 16\beta |\nabla_{x,x+z}^n f|$. Then \eqref{zanahoria} is bounded above by
\begin{equation}
\label{papa}
\beta \sum_{\substack{x\in \bb Z\\ z >0}} |a(z)| \big|\nabla_{x,x+z}^nf\big|^2 
	+ \frac{1}{8\beta}\sum_{z>0}  |a(z)| z^2 \mc D(h).
\end{equation}
Here we use that $ \int \big(\bar \eta (x) \bar \eta (x+1)\big)^2 d\mu_{1/2}= \frac{1}{16}$, that
\[
\sum_{x \in \bb Z} \sum_{i=1}^{z-1}I_{x+i,x+i+1}(h) = 2(z-1) \mc D(h)
\]
and that $(4z-7)(z-1) \leq 4z^2$.

Now, since the symmetric part of $L_n$ is equal to $n^2 S$, where $S$ is given in \eqref{sym_part_gen}, we have   $\<h,-L_nh\> = n^2 \<h,- Sh\>$.
% 
%$$\<h,-L_nh\> = 
%\<h,-n^2 Sh\>= -n^2\sum_{x, y \in \bb Z} s(y-x) I_{x,y}(h)=-2 n^2\sum_{\substack{x \in \bb Z \\ z >0}}s(z)I_{x,x+z}%(h).
%$$
By Lemma \ref{numer2}, the sum
\[
\frac{1}{n} \sum_{\substack{x \in \bb Z \\ z >0}} |a(z)| |\nabla_{x,x+z}^n f|^2 
\]
is uniformly bounded in $n$.
From Lemma \ref{equiv_dir_form}, taking $\beta=\frac{C_2}{8n^2}\sum_{z>0} z^2|a(z)|$ we get that \eqref{papa} is bounded above by $C(f)/n$, where $C(f)$ only depends on $f$, $a(\cdot)$ and the constant $C_2$ of Lemma \ref{equiv_dir_form}. This ends the proof of the lemma.
\end{proof}

The method described above can be seen as a long-range version of the {\em integrations-by-parts formula} used for example in Section 7.2 of \cite{KipLan}. The particular form presented here was taken from \cite{GonJarSim}.

Finally, we  show the equivalence between Dirichlet forms that was used above.

\begin{lemma}[Moving-particle lemma \cite{Qua}, \cite{DiaS-C}]
\label{moving}
Recall \eqref{naranja}. For any $x, z \in \bb Z$ with $z>0$ and any $h \in L^2(\mu_{1/2})$,
\begin{equation}
\label{uva}
I_{x,x+z}(h) \leq (4z-3) \sum_{y=x}^{x+z-1} I_{y,y+1}(h).
\end{equation}
\end{lemma}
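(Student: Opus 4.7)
The plan is to realize the transposition of sites $x$ and $x+z$ as a product of nearest-neighbour transpositions, then apply a weighted Cauchy--Schwarz inequality to the resulting telescoping sum, taking advantage of the fact that $\mu_{1/2}$ is invariant under each nearest-neighbour exchange.

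Concretely, I would first choose the following ``up-then-down'' decomposition. Define
\[
\sigma_1 = \tau_{x,x+1}, \quad \sigma_2 = \tau_{x+1,x+2}\sigma_1, \quad \ldots, \quad \sigma_z = \tau_{x+z-1,x+z}\sigma_{z-1},
\]
and then continue with
\[
\sigma_{z+1} = \tau_{x+z-2,x+z-1}\sigma_z, \quad \ldots, \quad \sigma_{2z-1}=\tau_{x,x+1}\sigma_{2z-2},
\]
where each $\tau_{y,y+1}$ denotes the nearest-neighbour transposition of sites $y$ and $y+1$. A direct check (move the contents of $x$ all the way to $x+z$, then bring back the contents of $x+z$, which are now sitting at $x+z-1$, all the way down to $x$) shows that $\sigma_{2z-1}\eta=\eta^{x,x+z}$. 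In this path, each bond $(y,y+1)$ with $y\in\{x,\dots,x+z-2\}$ is traversed exactly twice (once on the way up, once on the way down), while the bond $(x+z-1,x+z)$ is traversed exactly once. Hence the telescoping identity
\[
h(\eta^{x,x+z})-h(\eta)=\sum_{i=1}^{2z-1}\bigl(h(\sigma_i\eta)-h(\sigma_{i-1}\eta)\bigr)
\]
holds, with $\sigma_0=\mathrm{id}$.

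Next I would apply the weighted Cauchy--Schwarz inequality $\bigl(\sum_i d_i\bigr)^2\leq \bigl(\sum_i c_i\bigr)\bigl(\sum_i d_i^2/c_i\bigr)$ with the weights $c_i=2$ whenever the $i$-th step uses a bond $(y,y+1)$ with $y\in\{x,\dots,x+z-2\}$, and $c_i=1$ for the unique step using the bond $(x+z-1,x+z)$. The total weight is
\[
\sum_{i=1}^{2z-1}c_i = 2\cdot 2(z-1) + 1 = 4z-3,
\]
which explains where the sharp constant comes from. Setting $d_i = h(\sigma_i\eta)-h(\sigma_{i-1}\eta)$ and integrating against $\mu_{1/2}$, I would use the fact that each $\sigma_{i-1}$ preserves $\mu_{1/2}$ (since it is a composition of nearest-neighbour transpositions and $\mu_{1/2}$ is Bernoulli product of density $\tfrac12$) to rewrite
\[
\int d_i^2\,d\mu_{1/2}=\int\bigl(h(\eta^{y_i,y_i+1})-h(\eta)\bigr)^2 d\mu_{1/2}=I_{y_i,y_i+1}(h),
\]
where $y_i$ is the bond used at step $i$. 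The two occurrences of each doubly-used bond $(y,y+1)$ then contribute $2\cdot I_{y,y+1}(h)/c_i = I_{y,y+1}(h)$ in total, and the single occurrence of the bond $(x+z-1,x+z)$ contributes $I_{x+z-1,x+z}(h)$. Summing yields
\[
I_{x,x+z}(h)\leq (4z-3)\sum_{y=x}^{x+z-1} I_{y,y+1}(h),
\]
which is exactly \eqref{uva}.

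The only real subtlety is getting the constant sharp: a naive, unweighted Cauchy--Schwarz applied to the same path would give $(2z-1)\cdot 2 = 4z-2$ rather than $4z-3$. This is why the weighted version, with weights exactly matching the bond multiplicities in the chosen path, is needed. No other step is delicate: the path decomposition is explicit, and the reduction of $\int d_i^2\,d\mu_{1/2}$ to $I_{y_i,y_i+1}(h)$ uses only the invariance of the product Bernoulli measure under any particle exchange.
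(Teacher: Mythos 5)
Your proof is correct and is essentially the paper's argument in a different packaging: the paper's recursive identity $\eta^{x,x+z}=\big((\eta^{x,x+1})^{x+1,x+z}\big)^{x,x+1}$, unrolled, produces exactly your up-then-down path with each bond $(y,y+1)$, $y\in\{x,\dots,x+z-2\}$, used twice and $(x+z-1,x+z)$ used once, and the paper's iterated three-term inequality with optimized $\theta$ is just your single weighted Cauchy--Schwarz applied level by level, yielding the same constant $4z-3$. Your one-shot presentation is arguably cleaner, but there is no substantive difference.
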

\begin{proof}
This result was proved in \cite{Qua} and \cite{DiaS-C} with a smaller constant in front of the sum. At the expenses of a constant factor, we can give a quicker proof.
First we observe that
\[
\eta^{x,x+z} = \big((\eta^{x,x+1})^{x+1,x+z}\big)^{x,x+1}. 
\]
Then we observe that for any $\theta \in (0,1/2)$,
\[
(a_1+a_2+a_3)^2 \leq \frac{a_1^2+a_2^2}{\theta} + \frac{a_3^2}{1-2\theta}.
\]
Now we write
\[
\begin{split}
h(\eta^{x,x+z}) -h(\eta) 
		&= h\Big( \big( ( \eta^{x,x+1})^{x+1,x+z}\big)^{x,x+1}\Big) -h\big( ( \eta^{x,x+1})^{x+1,x+z}\big)\\
		&\quad + h\big( ( \eta^{x,x+1})^{x+1,x+z}\big) - h( \eta^{x,x+1})\\
		&\quad \quad h( \eta^{x,x+1}) -h(\eta)
\end{split}
\]
to get the bound
\[
I_{x,x+z}(h) \leq \frac{2}{\theta} I_{x,x+1}(h) + \frac{1}{1-2\theta} I_{x+1,x+z}(h).
\]
Now we can proceed in an iterative way: assume that
\[
I_{x+1,x+z}(h) \leq  a(z-1) \sum_{y=x+1}^{z-1} I_{y,y+1}(h).
\]
Then taking $\theta = \frac{2}{4+a(z-1)}$ we see that
\[
I_{x,x+z}(h) \leq (4+a(z-1))\Big( I_{x,x+1}(h) + \frac{1}{a(z-1)}I_{x+1,x+z}(h)\Big).
\]
Since we can trivially take $a(1)=1$, the lemma is proved.
\end{proof}

\begin{lemma}[Equivalence of Dirichlet forms]
\label{equiv_dir_form}

For any $h\in L^2(\mu_{1/2})$, let $\mc D(h)$ be the Dirichlet form defined in \eqref{lima}.
There exist constants $C_1,C_2$ such that
\begin{equation}
\label{durazno}
C_1\<h,- Sh\>\leq \mathcal{D}(h)\leq C_2 \<h,- Sh\>.
\end{equation}
\end{lemma}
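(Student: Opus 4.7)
The plan is to prove the two inequalities in \eqref{durazno} separately, starting from the standard identity
\[
\langle h,-Sh\rangle \;=\; \tfrac{1}{4}\sum_{x,y\in\bb Z} s(y-x)\,I_{x,y}(h) \;=\; \tfrac{1}{2}\sum_{x\in\bb Z}\sum_{z>0} s(z)\,I_{x,x+z}(h),
\]
obtained by symmetrising $-\int h\cdot Sh\,d\mu_{1/2}$ via the change of variable $\eta\mapsto\eta^{x,y}$ (under which $\mu_{1/2}$ is invariant). This reduces matters to comparing the weighted sum $\sum_x\sum_{z>0} s(z)\,I_{x,x+z}(h)$ with the unweighted nearest-neighbour sum $\sum_x I_{x,x+1}(h)$.

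For the lower bound $C_1\langle h,-Sh\rangle\leq\mc D(h)$, I would apply Lemma \ref{moving} to each $I_{x,x+z}(h)$ appearing in the identity, obtaining $I_{x,x+z}(h)\leq (4z-3)\sum_{y=x}^{x+z-1}I_{y,y+1}(h)$. Exchanging the order of summation, the $x$-sum produces a translation-invariant factor $z$, so $\langle h,-Sh\rangle \leq \bigl(\sum_{z>0} s(z)(4z-3)z\bigr)\mc D(h)$. The prefactor is finite by \eqref{H1} (since $\sum_{z>0} z^2 s(z)\leq \sigma^2$), and $C_1$ may be taken as its reciprocal; this direction is immediate.

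The upper bound $\mc D(h)\leq C_2\langle h,-Sh\rangle$ is the main obstacle and is where irreducibility enters decisively. Since $s$ is symmetric and irreducible, $\gcd(\supp s)=1$, so by Bezout there exist $z_1,\ldots,z_k\in\supp(s)\cap\bb Z_{>0}$ and integers $n_1,\ldots,n_k$ with $\sum_i n_iz_i=1$. Using the elementary identity $(a,c)=(a,b)(b,c)(a,b)$ iteratively, I would decompose the nearest-neighbour transposition $(x,x+1)$ as a product of a fixed number $M$ of transpositions $\tau_j=(y_j(x),y_j(x)+\xi_j)$ with $\xi_j\in\{z_1,\ldots,z_k\}$; crucially, both $M$ and the offsets $y_j(x)-x$ depend only on $s$, not on $x$. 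A telescoping expansion of $h(\eta^{x,x+1})-h(\eta)$ along the chain of intermediate configurations $\eta_j=\eta_{j-1}^{\tau_j}$, combined with Cauchy-Schwarz and the invariance of $\mu_{1/2}$ under each intermediate swap, yields
\[
I_{x,x+1}(h) \;\leq\; M\sum_{j=1}^M I_{y_j(x),\,y_j(x)+\xi_j}(h).
\]
Summing over $x\in\bb Z$ and translating inside each of the $M$ sums gives $\mc D(h) \leq M\sum_{j=1}^M \sum_{x'} I_{x',x'+\xi_j}(h) \leq 2M\bigl(\sum_{j=1}^M s(\xi_j)^{-1}\bigr)\langle h,-Sh\rangle$, from which $C_2$ can be read off. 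The subtle point is arranging the Bezout decomposition with \emph{uniform} control on $M$; once that finite combinatorial step on transpositions of $\bb Z$ is performed, the rest of the argument is a routine telescoping-plus-Cauchy-Schwarz estimate that does not depend on the specific test function $h$.
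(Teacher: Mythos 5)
Your proposal is correct and follows essentially the same route as the paper: the same symmetrization identity $\<h,-Sh\>=\tfrac12\sum_{x}\sum_{z>0}s(z)I_{x,x+z}(h)$, the moving-particle lemma plus translation invariance for the bound $C_1\<h,-Sh\>\le\mc D(h)$, and, for the reverse bound, a decomposition of the nearest-neighbour swap along a finite path with steps in $\supp(s)$ (your Bezout/gcd formulation is just the paper's irreducibility path $x_0=0,\dots,x_m=1$ with $s(x_i-x_{i-1})>0$, and your telescoping-plus-Cauchy--Schwarz step is exactly the paper's reuse of the proof of Lemma \ref{moving}). The resulting constants differ only in inessential numerical factors.
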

\begin{proof}
First we note that
$$\<h,-Sh\>= \tfrac{1}{4}\sum_{x, z \in \bb Z} s(z) \int( h(\eta^{x,x+z})-h(\eta)\big)^2 d\mu_{1/2}=\tfrac{1}{2}\sum_{\substack{x,z \in \bb Z \\ z>0}} s(z) I_{x,x+z}(h).
$$
Now, from \eqref{uva} and by translation invariance,  we get
\begin{equation}
\begin{split}
\<h,-Sh\>
		&\leq \tfrac{1}{2}\sum_{\substack{x, z \in \bb Z\\ z>0 }}  s(z) (4z-3) \sum_{y=x}^{x+z-1} I_{y,y+1}(h)
		\leq \sum_{z\in\mathbb{Z}}(4z-3)z s(z)\mathcal{D}(h)\leq 4 \sigma^2\mathcal{D}(h).
\end{split}
\end{equation}
This proves the estimate on the left-hand side in the statement of the lemma for the choice $C_1=\frac{1}{4\sigma^2}$.

In order to prove the second estimate, let $\{x_0,\dots,x_m\}$ be a sequence in $\bb Z$ such that $x_0 =0$, $x_m=1$ and $s(x_i-x_{i-1})>0$ for any $i \in \{1,\dots,m\}$. This sequence exists by the irreducibility of $s(\cdot)$. Notice that the proof of Lemma \ref{moving} also shows that
\[
I_{x,x+1}(h) \leq (4m-3) \sum_{i=1}^m I_{x_{i-1},x_i}(h). 
\]
Define $z_i = x_i-x_{i-1}$, $i=1,\dots,m$. By translation invariance, we have that
\[
\sum_{x \in \bb Z} I_{x,x+1}(h) \leq (4m-3)\sum_{i=1}^m\sum_{x \in \bb Z} I_{x,x+z_i}(h)
		\leq \frac{(4m+-3)m}{\min_{i} s(z_i)} \sum_{\substack{x, z \in \bb Z \\ z>0}} s(z) I_{x,x+z}(h),
\]
which proves the second estimate in the statement of the lemma for $C_2= \frac{(4m-3)m}{\min_i s(z_i)}$. 
\end{proof}

\begin{remark}
In this article we only need the second estimate in \eqref{durazno}, but for completeness  we included also the first estimate.
\end{remark}

%Now we prove the remaining estimate. Since $s(\cdot)$ is irreducible, we know that for each $x\in\bb Z$, there exists a path of $m_1$ jumps of size $z_1$, $m_2$ jumps of size $z_2$,..., $m_n$ jumps of size $z_n$, where for each $i=1,\cdots, n  $  we have $s(z_i)>0$ and $\sum_{i=1}^nm_iz_i=1$. Let $s_{min}:={\min \{s(z): s(z)> 0\}}$ and note that the size of the path is given by $\sum_{i=1}^n m_i:=m$. 
%Now, by a telescopic argument, the Cauchy-Schwarz inequality and translation invariance we have that
%\begin{equation}
%\begin{split}
%\mc D(h)=\sum_{x\in\bb Z} \int  \big( h&(\eta^{x,x+1})-h(\eta)\big)^2 d\mu_{1/2}\leq \sum_{x\in\mathbb{Z}}m \sum_{i=1}^n m_i\int (h(\eta^{x,x+z_i})-h(\eta))^2 d\mu_{1/2}
%\\&\leq \sum_{x\in\mathbb{Z}}\frac{m}{\min \{s(z): s(z)> 0\}} \sum_{i=1}^{n}m_is(z_i)\int (h(\eta^{x,x+z_i})-h(\eta))^2d\mu_{1/2}
%\\ &\leq C_2 \<h, -Sh\>, \textrm{for} \, C_2= {m^2}/{s_{min}}.
%\end{split}
%\end{equation}
%This ends the proof.

\subsubsection{The second-order Boltzmann-Gibbs principle}
Notice that by \eqref{pera} and \eqref{damasco} we have that 
\[
\mc A_t^n(f) = -2b\int_0^t  \sum_{\substack{x \in \bb Z\\z >0}} \bar{\eta}_s^n(x) \bar{\eta}_s^n(x+z) a(z) n \nabla_{x,x+z}^n f ds.
\]
Define 
\begin{equation}\label{der_f}
\widetilde \nabla_x^n f =2n \sum_{z >0} a(z)\nabla_{x,x+z}^n f.
\end{equation}
By Lemma \ref{numer3},
\[
\lim_{n \to \infty} \tfrac{1}{n} \sum_{x \in \bb Z} \big( \widetilde \nabla_x^n f - m f'\big(\tfrac{x}{n}\big)\big)^2 =0.
\]
Let us define the process $\{\hat{\mc A}_t^n(f); t \in [0,T]\}$ as
\[
\hat{\mc A}_t^n(f) = -b\int_0^t \sum_{x \in \bb Z} \bar \eta_s^n(x) \bar \eta_s^n(x+1) \widetilde \nabla_x^n f ds.
\]
In Lemma \ref{estimate_1} above, what we have showed is that the processes $\mc A_t^n(f)$ and $\hat{\mc A}_t^n(f)$ are asymptotically equivalent in $L^2(\bb P_n)$. The following proposition shows that the process $\hat{ \mc A}_t^n(f)$ can be approximated by a functional of the density of particles:

\begin{proposition}
\label{2BG}
There exists a finite constant $C$ such that
\begin{equation}
\label{2BGbound}
\bb E_n \Big[ \Big( \int_0^t \sum_{x \in \bb Z} \big\{ \bar \eta_s^n(x) \bar \eta_s^n(x+1) - \psi_x^{\ell}(\eta_s^n) \big\} \widetilde \nabla_x^n f ds \Big)^2 \Big] \leq \frac{C \ell t}{n^2} \sum_{x \in \bb Z} \big(\widetilde \nabla_x^n f \big)^2.
\end{equation}
for any $t \geq 0$, any $f \in \mc S(\bb R)$ and any $\ell,n \in \bb N$, 
where for $\eta\in\Omega$,  
\[
\psi_x^\ell(\eta):=\tfrac{\ell}{\ell-1} \big( \eta^\ell(x) - \tfrac{1}{2}\big)^2 -\tfrac{1}{4(\ell-1)}
\]
and 
\[
\eta^\ell(x):=\frac 1\ell \sum_{y=x}^{x+\ell-1}\eta(y).
\]
\end{proposition}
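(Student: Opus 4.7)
My plan is to follow the dyadic multi-scale strategy of \cite{GonJar}, built on the Kipnis--Varadhan variational inequality from \cite{KomLanOll} that was already used in the proof of Lemma \ref{estimate_1}. First, expanding $(\bar\eta^\ell(x))^2$ and using $\bar\eta(y)^2=\tfrac14$ yields the algebraic identity $\psi_x^\ell(\eta)=\tfrac{2}{\ell(\ell-1)}\sum_{x\leq y<z\leq x+\ell-1}\bar\eta(y)\bar\eta(z)$; in particular $\psi_x^2(\eta)=\bar\eta(x)\bar\eta(x+1)$, so the integrand on the left-hand side of \eqref{2BGbound} equals exactly $\psi_x^2(\eta)-\psi_x^\ell(\eta)$. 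Assuming for simplicity $\ell=2^K$, I would telescope this as $\sum_{k=1}^{K-1}(\psi_x^{2^k}-\psi_x^{2^{k+1}})$, so that by Minkowski's inequality in $L^2(\bb P_n)$ it suffices to prove, for each $k$, the single-scale estimate
\[
\bb E_n\Big[\Big(\int_0^t\sum_{x\in\bb Z}\bigl(\psi_x^{2^k}-\psi_x^{2^{k+1}}\bigr)(\eta_s^n)\,\widetilde\nabla^n_x f\,ds\Big)^2\Big]\leq \frac{C\cdot 2^k\,t}{n^2}\sum_{x\in\bb Z}\bigl(\widetilde\nabla^n_x f\bigr)^2,
\]
since summing square roots over $k$ produces a geometric series of ratio $\sqrt{2}$ whose square is of order $\ell$, matching the right-hand side of \eqref{2BGbound}.

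For each single-scale bound I would apply Lemma 2.4 of \cite{KomLanOll} to reduce the problem to the static variational estimate
\[
\sup_h\Big\{2\sum_{x\in\bb Z}\widetilde\nabla^n_x f\,\bigl\langle\psi_x^{2^k}-\psi_x^{2^{k+1}},h\bigr\rangle-n^2\langle h,-Sh\rangle\Big\}\leq \frac{C\cdot 2^k}{n^2}\sum_{x\in\bb Z}\bigl(\widetilde\nabla^n_x f\bigr)^2,
\]
where the supremum runs over local $h\in L^2(\mu_{1/2})$. The key algebraic step is to rewrite the inner product as a sum of elementary bond differences $h(\eta^{y,y+1})-h(\eta)$: using $\bar\eta^{2\ell}(x)=\tfrac12(\bar\eta^\ell(x)+\bar\eta^\ell(x+\ell))$ one sees that $\psi_x^{2\ell}-\psi_x^\ell$ is a linear combination of ``long-range'' products $\bar\eta(y)\bar\eta(y')$ with $|y-y'|$ of order $2^k$; by invariance of $\mu_{1/2}$ under the transposition $\eta\mapsto\eta^{y,y+1}$ (in the spirit of \eqref{zanahoria}) and a telescoping along a path of length at most $2\cdot 2^k$ (as in the proof of Lemma \ref{moving}), each $\langle\psi_x^{2^k}-\psi_x^{2^{k+1}},h\rangle$ becomes a finite sum of terms $\langle F,h(\eta^{y,y+1})-h(\eta)\rangle$ with $F$ a bounded local function. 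A weighted Cauchy--Schwarz with weight of order $2^k/n^2$ then splits each term into a numerical piece producing the desired $2^k/n^2\cdot(\widetilde\nabla^n_x f)^2$ contribution, and a Dirichlet piece $I_{y,y+1}(h)$ which, after summation, yields $\mc D(h)$; by Lemma \ref{equiv_dir_form} this is dominated by $C_2\langle h,-Sh\rangle$ and absorbed into the negative term of the variational formula.

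The main obstacle I foresee is the careful book-keeping of the expansion $\psi_x^{2\ell}-\psi_x^\ell$ into elementary bond gradients: the resulting bound must depend \emph{linearly}, not quadratically, on the scale $2^k$, for otherwise the geometric sum over dyadic scales diverges and the right-hand side of \eqref{2BGbound} would carry $\ell^2$ in place of $\ell$, far too weak to close the proof of Theorem \ref{t1} under the eventual constraint $\ell\ll n$. Securing this linear dependence is precisely the content of the second-order Boltzmann--Gibbs principle, and it is made possible by the centering of $\psi_x^\ell$ (so that $\langle\psi_x^\ell\rangle_{\mu_{1/2}}=0$, as follows at once from computing the variance of $\eta^\ell(x)$) together with its block-averaging structure, which replaces the fluctuations of a single size-$2^k$ two-point product by the much smaller fluctuations of a centered empirical average.
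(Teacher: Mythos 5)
The paper does not actually prove Proposition \ref{2BG}: it defers to Corollary 2 of \cite{GonJar}, and your plan is essentially a reconstruction of that cited argument. The skeleton is correct and matches \cite{GonJar}: the identity $\psi_x^\ell=\tfrac{2}{\ell(\ell-1)}\sum_{y<z}\bar\eta(y)\bar\eta(z)$ (hence $\psi_x^2=\bar\eta(x)\bar\eta(x+1)$), the dyadic telescoping, Minkowski's inequality, the reduction of each scale to a static variational bound via Lemma 2.4 of \cite{KomLanOll}, and the target single-scale estimate $C2^ktn^{-2}\sum_x(\widetilde\nabla^n_xf)^2$, whose square roots sum geometrically to the stated $C\ell tn^{-2}$ bound.

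The gap is in how you propose to prove the single-scale estimate. Expanding $\psi_x^{2^k}-\psi_x^{2^{k+1}}$ into elementary products $\bar\eta(y)\bar\eta(y')$ and treating each one with the transposition trick, Lemma \ref{moving} and a term-by-term weighted Cauchy--Schwarz, as in Lemma \ref{estimate_1}, cannot yield linear dependence on $2^k$: applying Cauchy--Schwarz pair by pair discards all cancellation among the $\sim 2^{2k}$ pairs, and a careful bookkeeping (coefficients of size $2^{-2k}$, transport cost $\sim 2^k$ per pair, overlap factor $\sim 2^k$ per bond) shows that the optimal choice of weights then gives $C2^{2k}n^{-2}\sum_xc_x^2$ per scale, hence $C\ell^2 tn^{-2}$ overall --- exactly the failure you flag as the main obstacle. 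The cure is not the grand-canonical centering $\langle\psi^\ell_x\rangle=0$ that you invoke, but the \emph{canonical} one: the normalization $\tfrac{\ell}{\ell-1}(\cdot)^2-\tfrac{1}{4(\ell-1)}$ is designed precisely so that the conditional expectation of $\psi_x^{2^k}-\psi_x^{2^{k+1}}$ given the number of particles in the block $\{x,\dots,x+2^{k+1}-1\}$ vanishes (a hypergeometric computation). This allows one to replace $h$ in the inner product by its fluctuation around the conditional expectation given that particle number and the exterior configuration, and to apply the spectral gap inequality, of order $(2^{k+1})^2$, for the exclusion process restricted to the block. Combined with $\mathrm{Var}(\psi_x^{2^k}-\psi_x^{2^{k+1}})\leq C2^{-2k}$ and the overlap factor $2^{k+1}$ from summing the block Dirichlet forms over $x$, this produces $(2^k)^2\cdot 2^{-2k}\cdot 2^k=2^k$, i.e.\ the linear per-scale bound. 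Without this spectral-gap step your scheme loses a factor of $\ell$ and does not close.
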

Last proposition is known as the {\em second-order Boltzmann-Gibbs principle} and it was introduced in \cite{GonJar}. The version presented here is a particular case of Corollary 2 in \cite{GonJar}, so we refer to that article for a proof. 

Define $\tilde{\psi}_x^\ell(\eta):=\Big(\eta^\ell(x)-\frac 1 2 \Big)^2$.
By the Cauchy-Schwarz inequality we see that 
 \begin{equation}
 \label{frutilla}
\bb E_n \Big[ \Big( \int_0^t \sum_{x \in \bb Z} \Big(\psi_x^{\ell}(\eta_s^n) - \tilde {\psi}_x^\ell(\eta_s^n) \Big) \widetilde \nabla_x^n f ds \Big)^2 \Big] \leq \frac{Ct}{\ell^2} \sum_{x \in \bb Z} \big(\widetilde \nabla_x^n f \big)^2,
\end{equation}
Let $\epsilon >0$ be fixed. Note that taking $\ell=\epsilon n$, the right hand-side of \eqref{2BGbound} is of order $t\epsilon$ and the right-hand side of \eqref{frutilla} is of order $t^2/(\epsilon^2 n)$. 
Moreover, observe that for the choice $\ell=\epsilon n$ we have that 
\begin{equation}
\label{mora}
\sum_{x \in \bb Z}  \tilde{\psi}_x^{\epsilon n}(\eta_s^n)\widetilde \nabla_x^n f = \frac 1 n \sum_{x \in \bb Z} \Big(\mc Y_s^n ( \epsilon^{-1}\mathbf{1}_{[\frac{x}{n},\frac{x}{n}+\epsilon)})\big( \tfrac{x}{n}\big)\Big)^2 \widetilde \nabla_x^n f.
\end{equation}
Define $\iota_\epsilon=\epsilon^{-1}\mathbf{1}_{[0,\epsilon)}$ and let $\ast$ denote the convolution operator.
The right-hand side of \eqref{mora} can be written as
\begin{equation}
\label{cuadraprox}
 \frac 1 n \sum_{x \in \bb Z} \Big(\mc Y_s^n \ast \iota_\epsilon\big( \tfrac{x}{n}\big)\Big)^2 \widetilde \nabla_x^n f,
\end{equation}
which is a function of the density field $\mc Y_s^n$. We conclude, at least heuristically, that $\mc A_t^n(f)$ looks like an approximation of the field $m \nabla (\mc Y_s^n)^2$ acting on the test function $f$. For later reference, we define for $\epsilon >0$, $n \in \bb N$, $t \in [0,T]$ and $f \in \mc S(\bb R)$
\[
\mc A_t^{n,\epsilon}(f) = \int_0^t \sum_{x \in \bb Z} \tilde{\psi}_x^{\epsilon n} (\eta_s^n)\widetilde \nabla_x^n f ds.
\]

\section{Tightness}\label {tightness}

In this section we prove tightness of the sequence $\{\mc Y_t^n; t \in [0,T]\}_{n \in \bb N}$. We will use Mitoma's criterion:

\begin{proposition}[Mitoma's criterion \cite{Mit}]
\label{Mit}
\quad

Fix $T >0$. A sequence $\{Y_t^n; t \in [0,T]\}_{n \in \bb N}$ of distribution-valued processes is tight in $\mc D([0,T]; \mc S'(\bb R))$ if and only if the sequence $\{\mc Y_t^n(f); t \in [0,T]\}_{n \in \bb N}$ of real-valued processes is tight in $\mc D([0,T]; \bb R)$ for any $f \in \mc S(\bb R)$. 
\end{proposition}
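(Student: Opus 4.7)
The plan is to follow the classical route for Mitoma's theorem: reduce tightness in $\mc D([0,T]; \mc S'(\bb R))$ to tightness in a Hilbert-space-valued Skorohod space obtained from the nuclear Fr\'echet structure of $\mc S(\bb R)$. The direct implication is soft: for each $f \in \mc S(\bb R)$ the evaluation functional $T \mapsto T(f)$ is continuous from $\mc S'(\bb R)$ to $\bb R$, so it induces a continuous map between the corresponding Skorohod spaces by pointwise composition, and tightness is preserved under continuous images.

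For the converse I would exploit the Hermite-based Hilbert scale $\{\mc H_p\}_{p \in \bb Z}$ with $\|\varphi\|_p^2 = \sum_k (2k+1)^{2p}|\langle \varphi, h_k\rangle|^2$, where $\{h_k\}$ is the Hermite basis. The standard ingredients are: $\mc S(\bb R) = \bigcap_p \mc H_p$ and $\mc S'(\bb R) = \bigcup_p \mc H_{-p}$ topologically; the embedding $\mc H_{-p} \hookrightarrow \mc H_{-q}$ is Hilbert--Schmidt whenever $q-p$ is large; and a subset of $\mc S'(\bb R)$ is relatively compact if and only if it is bounded in some $\mc H_{-p}$. Based on this, the argument splits into two pieces. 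For the compact containment condition, scalar tightness of $\{\mc Y^n_{\cdot}(h_k)\}_n$ for each $k$ yields bounds $R_{k,\varepsilon}$ such that $\bb P(\sup_{t \leq T}|\mc Y^n_t(h_k)| > R_{k,\varepsilon}) < \varepsilon 2^{-k-1}$; a union bound then gives
\[
\bb P\Big( \sup_{t \leq T} \|\mc Y^n_t\|_{-p}^2 > \sum_k (2k+1)^{-2p} R_{k,\varepsilon}^2 \Big) < \varepsilon
\]
for $p$ large enough that the series converges, and the resulting bounded subset of $\mc H_{-p}$ is compact in $\mc S'(\bb R)$. For the modulus-of-continuity condition I would apply a scalar Aldous-type criterion coordinatewise to $\{\mc Y^n_{\cdot}(h_k)\}_n$ and combine it with the Hilbert--Schmidt embedding $\mc H_{-p} \hookrightarrow \mc H_{-q}$ for $q$ even larger, transferring coordinatewise equicontinuity into equicontinuity in the stronger norm $\|\cdot\|_{-q}$ and hence into tightness in $\mc D([0,T]; \mc H_{-q})$, which upgrades to tightness in $\mc D([0,T]; \mc S'(\bb R))$ through the continuous inclusion.

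The main obstacle I expect is the summability of the coordinatewise Aldous moduli against the nuclear weights $(2k+1)^{-2p}$: scalar tightness is only qualitative, so one must either extract a quantitative modulus that is uniform in $k$ up to polynomial dependence (which the weights absorb for $p$ large), or invoke Mitoma's original argument that uses continuity of the map $f \mapsto \mc Y^n_{\cdot}(f)$ to encode the $f$-dependence of the tightness moduli through a finite number of Schwartz seminorms. Once this summability is secured, the standard Hilbert-space-valued tightness criterion in the Skorohod space completes the argument.
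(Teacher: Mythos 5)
The paper does not prove this proposition: it is quoted from Mitoma's 1983 article and used as a black box, so there is no internal argument to compare yours against. Your plan follows the classical proof of Mitoma's theorem --- evaluation maps for the easy direction; the Hermite Hilbert scale $\{\mc H_{-p}\}$, Hilbert--Schmidt embeddings, and a compact-containment-plus-modulus argument for the converse --- which is the right skeleton.

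The one step that would fail exactly as written is the union bound for compact containment. Coordinatewise tightness of $\{\mc Y^n_\cdot(h_k)\}_n$ produces radii $R_{k,\varepsilon}$ with no control whatsoever on their growth in $k$; they may grow faster than any polynomial, in which case no choice of $p$ makes $\sum_k (2k+1)^{-2p}R_{k,\varepsilon}^2$ finite. This cannot be repaired by working only with the countable Hermite basis: the hypothesis genuinely needed is tightness of $\{\mc Y^n_\cdot(f)\}_n$ for \emph{every} $f\in\mc S(\bb R)$, combined with the linearity and a.s.\ continuity of $f\mapsto\mc Y^n_\cdot(f)$ and an equicontinuity (Baire-category) argument showing that the tightness moduli are controlled, uniformly in $n$, by finitely many Schwartz seminorms of $f$; it is this step that forces a bound of the form $R_{k,\varepsilon}\leq C_\varepsilon\|h_k\|_q$, which the nuclear weights can then absorb. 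You correctly flag this as the main obstacle in your closing paragraph and point to the right resolution, so the plan is sound as a roadmap; but that quantitative uniformization is the actual content of the theorem and remains to be carried out (or, as the paper does, simply cited).
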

Under this criterion we are left to check tightness for the real-valued processes $\{\mc Y_t^n(f); t \in [0,T]\}_{n \in \bb N}$ for any $f \in \mc S(\bb R)$. By \eqref{martdec}, it is enough to show tightness for the sequence of random variables $\{\mc Y_0^n(f)\}_{n \in \bb N}$ and for each one of the sequences of processes
\begin{equation}
\label{processes}
\{\mc M_t^n(f); t \in [0,T]\}_{n \in \bb N},\; \{\mc A_t^n(f); t \in [0,T]\}_{n \in \bb N}, \; \{\textstyle{\int}_0^t \mc Y_s^n(\mc S_n f) ds; t \in [0,T]\}_{n \in \bb N}.
\end{equation}

\subsection{Tightness at the initial time}
We start with the simplest one which is the initial data $\{\mc Y_0^n(f)\}_{n \in \bb N}$. In fact,  a simple computation  shows that the characteristic function of $\mc Y_0^n(f)$ satisfies 
\[
\lim_{n \to \infty} \log \bb E_n\big[ e^{i\theta \mc Y_0^n(f)}\big] = -\frac{1}{8}\theta \|f\|_{L^2(\bb R)}^2.
\]
Therefore, $\{\mc Y_0^n(f)\}_{n \in \bb N}$  converges to a Gaussian law of mean $0$ and variance $\frac{1}{4} \|f\|_{L^2(\bb R)}^2$ and, in particular, it is tight.

\subsection{Convergence of the martingales}
Now we turn into the proof of tightness for the martingales. In fact we are going to prove that 
the sequence of martingales  $\{\mc M_t^n(f); t \in [0,T]\}_{n \in \bb N}$ converges in law, as $n \to \infty$ to a Brownian motion of quadratic variation $\frac{1}{4} \sigma^2 \|\nabla f\|_{L^2(\bb R)}^2 t$, with respect to the uniform topology of $\mc D([0,T]; \bb R)$. We will use the following criterion, taken from Theorem 2.1 in \cite{Whi}:

\begin{proposition}
\label{martconv}
Let $\{M_t^n; t \in [0,T]\}_{n \in \bb N}$ be a sequence of square-integrable martingales. Let $\{\<M_t^n\>; t \in [0,T]\}$ denote the quadratic variation of $\{M_t^n; t \in [0,T]\}$ and define the maximal jump size of $M_t^n$ as the process
\[
\Delta M_T^n = \sup_{s \leq t} \big|M_t^n-M_{t-}^n\big|.
\]
Assume that
\begin{itemize}
\item[i)] the quadratic variation $\{\<M_t^n\>; t \in [0,T]\}$ is a continuous process,
\item[ii)] the maximal jump satisfies
\[
\lim_{n \to \infty} E\big[\big(\Delta M_T^n \big)^2 \big] =0,
\]
\item[iii)] the quadratic variation processes $\{\<M_t^n\>; t \in [0,T]\}_{n \in \bb N}$ converge in law to the deterministic path $\{\sigma^2 t; t \in  [0,T]\}$.
\end{itemize}
Then $\{M_t^n; t \in [0,T]\}_{n \in \bb N}$ converges in law to a Brownian motion of quadratic variation $\sigma^2 t$.
\end{proposition}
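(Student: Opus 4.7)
The plan is to recognize Proposition \ref{martconv} as a version of the martingale functional central limit theorem (Rebolledo's theorem), and to organize the proof along the classical two-step scheme: first establish tightness of $\{M_t^n; t\in[0,T]\}_{n\in\bb N}$ in the Skorohod space $\mc D([0,T];\bb R)$, then characterize every limit point as a Brownian motion with variance $\sigma^2 t$ via L\'evy's criterion. Since the statement itself is borrowed from Whitt, the job is mostly to package known ingredients.

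For tightness, I would use the Aldous--Rebolledo criterion, which for square-integrable martingales reduces the problem to controlling the quadratic variation process and the maximum jump size. Hypothesis iii) gives weak convergence of $\{\<M_t^n\>; t\in[0,T]\}_{n\in\bb N}$ to a deterministic continuous increasing path, which in particular yields an $o_\bb P(1)$ modulus of continuity for $\<M^n_\cdot\>$; hypothesis ii) gives $\Delta M_T^n \to 0$ in $L^2$. Combined with the bound $\bb E[(M^n_{\tau+h}-M^n_\tau)^2 \mid \mc F_\tau] = \bb E[\<M^n\>_{\tau+h}-\<M^n\>_\tau \mid \mc F_\tau]$ for stopping times $\tau$, these two facts verify Aldous's condition and produce tightness.

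For the identification of limit points, let $M$ be any subsequential limit. First, condition ii) ensures that $M$ has no jumps almost surely, since the maximal jump functional is continuous on Skorohod space at continuous paths and its $L^2$-norm vanishes in the limit; hence $M\in \mc C([0,T];\bb R)$. Second, $M$ inherits the martingale property: bounds like $\bb E[(M_t^n)^2] = \bb E[\<M^n\>_t]$ together with iii) give uniform integrability of $\{M_t^n\}_{n\in\bb N}$, so one can pass to the limit in the identity $\bb E[M_t^n \varphi(M_{s_1}^n,\dots,M_{s_k}^n)] = \bb E[M_s^n \varphi(\cdots)]$ for bounded continuous $\varphi$ and $s_1\leq \cdots\leq s_k\leq s\leq t$. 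Third, the process $(M_t^n)^2 - \<M_t^n\>$ is also a martingale, and the same uniform-integrability argument, together with iii), shows that $M_t^2 - \sigma^2 t$ is a martingale; by continuity of $M$ and uniqueness of the Doob--Meyer decomposition, $\<M\>_t = \sigma^2 t$. L\'evy's characterization then forces $M$ to be a Brownian motion with variance $\sigma^2 t$, and since the law of the limit is uniquely determined, the whole sequence converges.

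The main obstacle is the third step, namely verifying that the predictable quadratic variation passes to the limit so that $(M^n_t)^2 - \<M^n\>_t$ converges to $M_t^2 - \sigma^2 t$ as a martingale. This requires uniform integrability of $(M_t^n)^2$, which in turn requires bounds on $\bb E[\<M^n\>_T^{1+\delta}]$ or an equivalent truncation argument; here the continuity hypothesis i) on $\<M^n\>$ and the jump control ii) interact to ensure that the martingale $(M_t^n)^2 - \<M_t^n\>$ has bounded jumps and the necessary moment control is inherited from iii). Once this is arranged, the L\'evy characterization closes the argument, and the convergence holds in the uniform topology because the limit is continuous.
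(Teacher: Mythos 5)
The paper does not actually prove this proposition: it is quoted from Theorem 2.1 of Whitt's survey \cite{Whi}, and the authors rely entirely on that reference. Your outline (tightness via Aldous--Rebolledo, continuity of limit points from ii), identification through the martingale problems for $M_t$ and $M_t^2-\sigma^2 t$, then L\'evy's characterization) is precisely the standard proof that the cited source presents, so in approach you are reconstructing the intended argument rather than diverging from it.

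There is, however, one assertion in your sketch that is wrong as stated, and it sits exactly at the step you yourself flag as the main obstacle. You claim that conditions i) and ii) make the jumps of $(M_t^n)^2-\<M_t^n\>$ bounded. Since $\<M_t^n\>$ is continuous by i), the jump of this process at time $t$ equals $2M_{t-}^n\,\Delta M_t^n+(\Delta M_t^n)^2$; condition ii) controls only $\Delta M_t^n$ and says nothing about the prefactor $M_{t-}^n$, which is unbounded in general, so smallness of the jumps of $M^n$ does not bound the jumps of $(M^n)^2-\<M^n\>$, nor does anything in i)--iii) directly give a moment bound such as $\sup_n \bb E\big[\<M_T^n\>^{1+\delta}\big]<+\infty$. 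The standard repair is localization: set $\tau_n=\inf\{t:\<M_t^n\>\geq \sigma^2T+1\}$. By iii) and the continuity of $\<M^n\>$ one has $\bb P(\tau_n<T)\to 0$, while the stopped martingales satisfy $\<M^n\>_{T\wedge\tau_n}\leq \sigma^2T+1$ deterministically, which supplies the uniform integrability needed both for the Aldous estimate over stopping times (where convergence in law of $\<M^n\>$ alone does not give $\bb E[\<M^n\>_{\tau+h}-\<M^n\>_\tau]\to 0$ uniformly) and for passing the martingale property of $(M^n_t)^2-\<M^n_t\>$ to the limit; one then transfers the conclusion back to $M^n$ since the stopped and unstopped sequences coincide with probability tending to one. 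With this localization inserted your argument is complete. Note that in the only place the paper uses the proposition, the bound \eqref{lips} makes $\<\mc M_t^n(f)\>$ uniformly Lipschitz in $t$ and $n$, so there the localization is vacuous and all the uniform integrability you need is immediate.
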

Recall from \eqref{quad_variation} that 
\begin{equation} \label{q_v}
\<\mc M_t^n(f)\> 
		= \int_0^t n \sum_{x,y \in \bb Z} p_n(y-x) \eta_s^n(x) \big(1-\eta_s^n(y)\big) ( f\big(\tfrac{y}{n}\big) -f\big(\tfrac{x}{n}\big))^2 ds.
\end{equation}
Therefore, condition i) of  Proposition \ref{martconv} is automatically satisfied.
Now, we show that the jumps of $\{\mc M_t^n(f); t \in [0,T]\}_{n\in\mathbb{N}}$ are asymptotically negligible. First, from \eqref{martdec}  we see that
\[
\sup_{t \in [0,T]} \big| \mc M_t^n(f)-\mc M_{t-}^n(f)\big| =\sup_{t \in [0,T]} \big| \mc Y_t^n(f)-\mc Y_{t-}^n(f)\big|,
\] 
since all the other terms in \eqref{martdec} are continuous in time.
Now, since at each time $t$ there is at most one jump, which changes the value of $\eta$ at two positions, this supremum is bounded by $\frac{2\|f\|_\infty}{ \sqrt n}$, which vanishes as $n\to\infty$.
Therefore, we are left to prove convergence in law of the process $\{\<\mc M_t^n(f)\>; t \in [0,T]\}_{n\in\mathbb{N}}$. 
Note that $ s(z)=(p(z)+p(-z))/2\geq p(z)/2$ for any $z \in \bb Z$. Therefore, from \eqref{q_v} we get that 
\begin{equation}
\label{lips}
\tfrac{d}{dt} \<\mc M_t^n(f) \> \leq \tfrac{2}{n} \sum_{x,y \in \bb Z} s(y-x) \big( \nabla_{x,y}^n f\big)^2.
\end{equation}
By Lemma \ref{numer2}, the right-hand side of this expression  converges to $2\sigma^2 \|\nabla f\|_{L^2(\bb R)}^2$ as $n\to\infty$, and in particular it is uniformly bounded in $n$ by a constant $c(f)$ that does not depend on $t$. In particular, the process $\{\<\mc M_t^n(f)\>; t \in [0,T]\}_{n\in\mathbb{N}}$ is uniformly Lipschitz in $t \in [0,T]$ and $n \in \bb N$. Therefore, the sequence $\{\<\mc M_t^n(f)\>; t \in [0,T]\}_{n \in \bb N}$ is equicontinuous and, in particular, tight.

The convergence of finite-dimensional distributions can be shown as follows. First, recall  \eqref{q_v}. We can write the sum in \eqref{q_v} as twice its half and in one of the halves we can make the exchange of variables $x \leftrightarrow y$ to get that
\[
 \bb E_n \Big[ \<\mc M_t^n(f)\>\Big] =  \frac{t}{4n} \sum_{x,y \in \bb Z} s(y-x) \big( \nabla_{x,y}^n f\big)^2 \xrightarrow{n \to \infty} \tfrac{1}{4} \sigma^2 t \| f'\|_{L^2(\bb R)}^2.
\]
Last convergence above follows from Lemma \ref{numer2}. Then we prove the convergence of the quadratic variation. From \eqref{q_v} and by centering, we get that
\[
\begin{split}
\bb E_n\Big[ \Big(\<\mc M_t^n(f)\> -& \bb E_n \Big[ \<\mc M_t^n(f)\>\Big]\Big)^2\Big]\\
		&= \bb E_n\Big[\Big(\int_0^t \tfrac{1}{n}\sum_{x,y}p_n(y-x)\big(\eta^n_s(x)(1-\eta^n_s(y))-\chi(\rho)\big)\big(\nabla_{x,y}^n f\big)^2\, ds\Big)^2\Big]\\
				&\leq 2 \bb E_n\Big[\Big(\int_0^t -\tfrac{1}{n}\sum_{x,y}p_n(y-x)\bar {\eta}^n_s(x)\bar{\eta}^n_s(y)\big(\nabla_{x,y}^n f\big)^2\, ds\Big)^2\Big]
				\\&+2\bb E_n\Big[\Big(\int_0^t \tfrac{1}{n} \sum_{x,y}\frac{p_n(y-x)}{2}\Big(\bar {\eta}^n_s(x)-\bar{\eta}^n_s(y)\Big)\big(\nabla_{x,y}^n f\big)^2\, ds\Big)^2\Big] 
\end{split}
\]
By a change of variables, the second term above can be written as
\[
\bb E_n\Big[\Big(\int_0^t \tfrac{1}{n}\sum_{x,y}a(y-x)\bar {\eta}^n_s(x)\big(\nabla_{x,y}^n f\big)^2\, ds\Big)^2\Big] 
\]
and by the Cauchy-Schwarz inequality and independence, it is bounded from above by 
$$ \frac{C t^2}{n^2} \sum_{x \in \bb Z} \Big( \sum_{y \in \bb Z} a(y-x) \big( \nabla_{x,y}^n f\big)^2 \Big)^2\leq\frac{c(f)t ^2}{n}.
		$$
Similarly, by a change of variables, the remaining term can be written as 
\[ \bb E_n\Big[\Big(\int_0^t -\tfrac{1}{n}\sum_{x,y}s_n(y-x)\bar {\eta}^n_s(x)\bar{\eta}^n_s(y)\big(\nabla_{x,y}^n f\big)^2\, ds\Big)^2\Big]				
\]
and by the Cauchy-Schwarz inequality plus independence,  it is bounded from above by
\[
 \frac{C t^2}{n^2} \sum_{x,y \in \bb Z} s(y-x)^2 \big(\nabla_{x,y}^n f\big)^4 \leq \frac{c(f)t ^2}{n}.
\]

From this, we conclude that the martingales $\{\mc M_t^n(f); t \in [0,T]\}_{n\in\mathbb{N}}$ converge in law, as $n \to \infty$, to a Brownian motion of quadratic variation $\frac{1}{4} \sigma^2 t\| f'\|_{L^2(\bb R)}^2$.

\subsection{Tightness of the integral terms }

Let us now prove tightness for the sequence $\{\int_0^t \mc Y_s^n(\mc S_n f) ds; t \in [0,T]\}_{n \in \bb N}$. 
In this case we use the so-called {\em Kolmogorov-Centsov criterion}:

\begin{proposition}[Kolmogorov-Centsov's criterion]
\label{KolCen}
A sequence $\{x_t^n; t \in [0,T]\}_{n \in \bb N}$ of continuous, real-valued, stochastic processes is tight with respect to the uniform topology of $\mc C([0,T]; \bb R)$ if the sequence of real-valued random variables $\{x_0^n\}_{n \in \bb N}$ is tight and there are positive constants $K,\alpha,\beta$ such that
\[
E[|x_t^n-x_s^n|^\alpha] \leq K|t-s|^{1+\beta}
\]
for any $s,t \in [0,T]$ and any $n \in \bb N$.
\end{proposition}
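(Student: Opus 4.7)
This is a classical result (essentially Kolmogorov's continuity theorem packaged as a tightness criterion; cf.\ Theorem 2.1 of Chapter I in Karatzas--Shreve), so the plan is to sketch the standard chaining argument. Tightness of a sequence in $\mc C([0,T];\bb R)$ is equivalent, by the Arzel\`a--Ascoli characterization of compact sets in $\mc C([0,T];\bb R)$ combined with Prokhorov's theorem, to two conditions: (a) the one-dimensional marginals at some fixed time (here $t=0$) form a tight family, and (b) the modulus of continuity
\[
w^n_\delta = \sup\big\{ |x_t^n - x_s^n| \,:\, s,t \in [0,T],\ |t-s|\leq \delta \big\}
\]
satisfies $\lim_{\delta \to 0} \limsup_{n \to \infty} P(w^n_\delta > \eta) = 0$ for every $\eta >0$. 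Condition (a) is supplied by hypothesis, so the only task is (b).

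To verify (b), I would fix $\gamma \in (0,\beta/\alpha)$ and consider the dyadic grid $D_k = \{jT 2^{-k} : 0 \leq j \leq 2^k\}$. The moment hypothesis together with Markov's inequality gives, for consecutive dyadic points $s_j,s_{j+1}\in D_k$,
\[
P\big(|x_{s_{j+1}}^n - x_{s_j}^n| > 2^{-k\gamma}\big) \leq K \big(T 2^{-k}\big)^{1+\beta} 2^{k\gamma\alpha},
\]
and a union bound over the $2^k$ consecutive pairs yields
\[
P\Big(\max_{0\leq j < 2^k} |x_{s_{j+1}}^n - x_{s_j}^n| > 2^{-k\gamma}\Big) \leq K T^{1+\beta}\, 2^{-k(\beta - \gamma\alpha)},
\]
which is summable in $k$ since $\beta - \gamma\alpha>0$, uniformly in $n$. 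Borel--Cantelli gives a (random) $k_0$, with tail bounds uniform in $n$, beyond which the above event fails. A standard chaining then writes an arbitrary pair $s,t$ with $|t-s|\leq 2^{-k_0}$ as a finite sum of increments along dyadic endpoints, producing
\[
|x_t^n - x_s^n| \leq C_\gamma\, |t-s|^\gamma
\]
for all $s,t \in [0,T]$, where $C_\gamma$ has tails uniform in $n$. This yields equicontinuity in probability, i.e.\ condition (b).

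The main (and only) subtlety is the chaining bookkeeping: one must ensure the dyadic union bounds are summable uniformly in $n$, which is exactly what the hypothesis that $K,\alpha,\beta$ are independent of $n$ delivers. Once this is in place, combining (a) and (b) with the Arzel\`a--Ascoli--Prokhorov criterion gives tightness of $\{x_t^n; t\in[0,T]\}_{n\in\bb N}$ in $\mc C([0,T];\bb R)$.
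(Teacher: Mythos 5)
The paper states Proposition \ref{KolCen} without proof, treating it as a classical criterion, so there is no in-paper argument to compare against; your sketch is the standard Kolmogorov--Chentsov chaining proof (Markov's inequality on dyadic increments, a union bound made summable by $\beta-\gamma\alpha>0$ uniformly in $n$, then the Arzel\`a--Ascoli/Prokhorov characterization of tightness in $\mc C([0,T];\bb R)$) and it is correct. The only loose phrasing is the appeal to Borel--Cantelli, which for a fixed $n$ yields an almost-sure statement; for tightness what you actually need, and what your summable bound already gives, is the quantitative form $\sup_n P\big(\exists\, k\geq k_0:\ \max_j |x^n_{s_{j+1}}-x^n_{s_j}|>2^{-k\gamma}\big)\leq KT^{1+\beta}\sum_{k\geq k_0}2^{-k(\beta-\gamma\alpha)}\to 0$ as $k_0\to\infty$, after which the chaining and the assumed path continuity control the modulus of continuity uniformly in $n$.
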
 
In the case  of integral processes, that is, processes of the form $\{\int_0^t x_s^n ds; t \in [0,T]\}$, a simple consequence of Proposition \ref{KolCen} is the following:
\begin{proposition}
\label{inttig}
A sequence of processes of the form $\{\int_0^t x_s^n ds; t \in [0,T]\}_{n \in \bb N}$ is tight, with respect to the uniform topology of  $\mc C([0,T]; \bb R)$, if
\[
\limsup_{n \to \infty} \sup_{0 \leq t \leq T} E[(x_t^n)^2] <+\infty.
\]
\end{proposition}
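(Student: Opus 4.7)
The plan is to check the hypotheses of the Kolmogorov--Centsov criterion (Proposition~\ref{KolCen}) for the sequence $X_t^n := \int_0^t x_s^n\, ds$. Since $X_0^n = 0$ deterministically, tightness of $\{X_0^n\}_{n\in\bb N}$ is immediate. It remains to produce a moment bound of the form $E[|X_t^n - X_s^n|^\alpha] \leq K|t-s|^{1+\beta}$ uniformly in $n$.

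The natural choice is $\alpha = 2$ and $\beta = 1$. For $0 \leq s \leq t \leq T$, I would write
\[
X_t^n - X_s^n = \int_s^t x_r^n\, dr,
\]
and apply the Cauchy--Schwarz inequality in the $dr$-integral to get
\[
|X_t^n - X_s^n|^2 \leq (t-s) \int_s^t (x_r^n)^2\, dr.
\]
Taking expectations and Fubini on the right-hand side yields
\[
E\big[|X_t^n - X_s^n|^2\big] \leq (t-s) \int_s^t E[(x_r^n)^2]\, dr \leq (t-s)^2 \sup_{0 \leq r \leq T} E[(x_r^n)^2].
\]
By hypothesis, $\sup_{0 \leq r \leq T} E[(x_r^n)^2]$ is bounded uniformly in $n$ (for $n$ large enough, and hence for all $n$ after possibly enlarging the constant), so one obtains the bound with $K = \limsup_{n\to\infty} \sup_{0 \leq r \leq T} E[(x_r^n)^2] + 1$, say, which is finite.

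With $\alpha = 2 > 0$ and $\beta = 1 > 0$, Proposition~\ref{KolCen} immediately gives tightness of $\{X_t^n; t \in [0,T]\}_{n\in\bb N}$ in $\mc C([0,T]; \bb R)$ with respect to the uniform topology. There is no real obstacle here: the argument is essentially one application of Cauchy--Schwarz, and the continuity of the paths $t \mapsto X_t^n$ (needed to invoke the Kolmogorov--Centsov criterion in its stated form) follows from the fact that integrals of locally integrable functions in $t$ are continuous. The only minor care point is to make sure the supremum hypothesis is used to control the integral uniformly in $n$, which is handled by the Fubini step above.
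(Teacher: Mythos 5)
Your proof is correct and is precisely the argument the paper has in mind: the paper states Proposition \ref{inttig} without proof as a ``simple consequence'' of the Kolmogorov--Centsov criterion, and the Cauchy--Schwarz step yielding $E[|X_t^n-X_s^n|^2]\leq (t-s)^2\sup_r E[(x_r^n)^2]$ with $\alpha=2$, $\beta=1$ is the intended route. The only cosmetic point is that for the finitely many small $n$ possibly excluded by the $\limsup$ hypothesis, tightness is automatic since any single Borel probability measure on the Polish space $\mc C([0,T];\bb R)$ is tight, so no enlargement of the constant is actually needed.
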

By independence, a simple computation shows that 
\[
\bb E_n \big[ \mc Y_s^n(\mc S_n f)^2 \big] = \tfrac{1}{4n} \sum_{x \in \bb Z} \big( \mc S_n f \big(\tfrac{x}{n}\big)\big)^2.
\]
We can bound the previous expression from above by
\begin{equation}
\begin{split}
&\tfrac{1}{2n} \sum_{x \in \bb Z} \big( \mc S_n f \big(\tfrac{x}{n}\big)-\frac{1}{2}\sigma^2 \Big(f'' \big(\tfrac{x}{n}\big)\Big)^2+ 
c(f)\\
\leq &\tfrac{1}{2n} \sup_{x\in\bb Z}\Big| \mc S_n f \big(\tfrac{x}{n}\big)-\frac{1}{2}\sigma^2f'' \big(\tfrac{x}{n}\big)\Big| \sum_{x \in \bb Z}\Big| \mc S_n f \big(\tfrac{x}{n}\big)-\frac{1}{2}\sigma^2f'' \big(\tfrac{x}{n}\big)\Big|+ 
c(f)
\end{split}
\end{equation}
By Lemma \ref{numer}, the term on the left hand side of the previous expression vanishes as $n\to\infty$, from where we conclude that  this sum is uniformly bounded in $n$. Therefore, Proposition \ref{inttig} shows that the sequence $\{\int_0^t \mc Y_s^n( \mc S_n f) ds ; t \in [0,T]\}_{n \in \bb N}$ is tight.

\subsection{Tightness of the non-linear term}

In order to prove tightness of $\{\mc Y_t^n(f); t \in [0,T]\}_{n \in \bb N}$, the most demanding term is $\{\mc A_t^n(f); t \in [0,T]\}_{n \in \bb N}$. By Lemma \ref{estimate_1}, if $\mc R_t^n(f) = \mc A_t^n(f) - \hat{ \mc A}_t^n(f)$, then $\bb{E}_{n}[(\mc R_t^n(f)^2]\leq c(f)t/n$, so that $\mc R_t^n(f)$ converges to $0$ in $L^2(\bb P_n)$, as $n\to\infty$. But this is not enough to get convergence to $0$ as {\em a process}. If we were able to prove tightness of $\{\mc R_t^n(f); t \in [0,T]\}_{n \in \bb N}$, then the convergence, as a process, would follow. We note however that since  the variance bound we  have so far is linear in $t$, we cannot use the  Kolmogorov-Centsov's criterion (Proposition \eqref{KolCen}). Neverthless,  using the Cauchy-Schwarz inequality we can obtain a crude estimate on the variance of $\mc R_t^n(f)$:
\[
\begin{split}
\bb E_n\big[ \mc R_t^n(f)^2\big] 
		&\leq t^2 \int \Big(\sum_{\substack{x \in \bb Z \\ z >0}} a(z) \big( \bar \eta(x) \bar \eta (x+z) - \bar \eta(x) \bar \eta(x+1) \big) \nabla_{x,x+z}^n f \Big)^2 d \mu_{1/2}\\
		&\leq \frac{t^2}{8} \Big(\sum_{\substack{x \in \bb Z \\ z >0}} a(z)^2 \big( \nabla_{x,x+z}^n f\big)^2 + \sum_{x \in \bb Z} \Big(\sum_{z > 0} a(z) \nabla_{x,x+z}^n f\Big)^2\Big)\leq c(f) t^2 n,
\end{split}
\]
for some constant $c(f)$. Therefore, now we have the bound
\[
\bb E_n \big[ \mc R_t^n(f)^2\big] \leq c(f) \min\{ t n^{-1}, t^2 n\}.
\]
One can easily check that these two bounds imply that, for any $\delta>0$,
\[
\bb E_n\big[ \big(\mc R_t^n(f)-\mc R_s^n(f)\big)^2\big] \leq c(f) \frac{|t-s|^{3/2-\delta}}{n^\delta}.
\]
 In particular, the Kolmogorov-Centsov's criterion now applies, from where we conclude that the sequence $\{\mc R_t^n(f); t \in [0,T]\}_{n \in \bb N}$ is tight. To finish, we are left to prove tightness of $\{\hat{\mc A}_t^n(f); t \in [0,T]\}_{n \in \bb N}$. By Proposition \ref{2BG},
\[
\bb E_n\big[\big( \hat{\mc A}_t^n(f) - \mc A_t^{n,\epsilon}(f)\big)^2\big] \leq c(f) t \epsilon,
\]
and by the Cauchy-Schwarz inequality we have that
\[
\bb E_n\big[ \mc A_t^{n,\epsilon}(f)^2\big] \leq \frac{t^2}{8\epsilon n} \sum_{x \in \bb Z} \big( \widetilde \nabla_x^n f\big)^2 \leq c(f) t^2 \epsilon^{-1}.
\]
Taking $\epsilon = \sqrt t$ we get the bound
\begin{equation}
\label{3/4bound}
\bb E_n\big[ \big( \hat{\mc A}_t^n(f) - \hat{\mc A}_s^n(f)\big)^2\big] \leq c(f) |t-s|^{3/2}.
\end{equation}
Therefore, by Proposition \ref{KolCen}, the sequence $\{\hat{\mc A}_t^n(f); t \in [0,T]\}_{n \in \bb N}$ is tight. From this we conclude that  the sequence $\{\mc A_t^n; t \in [0,T]\}_{n \in \bb N}$ is tight.

\section{Proof of Theorem \ref{t1}}\label{s.proof}

Since, from the computations of Section \ref{tightness}, we know that the sequence $\{\mc Y_t^n; t \in [0,T]\}_{n \in \bb N}$ is tight, then it has limit points. Let $\{\mc Y_t; t \in [0,T]\}$ be one of those limit points and let $n'$ be a subsequence such that $\mc Y_t^{n'} \to \mc Y_t$. If we can prove that $\{\mc Y_t; t \in [0,T]\}$ is a stationary energy solution of the stochastic Burgers equation \eqref{SBE}, then  convergence in law of $\{\mc Y_t^n;t \in [0,T]\}$ to $\{\mc Y_t; t \in [0,T]\}$ follows from the uniqueness in law of such solutions. Before proving that $\{\mc Y_t; t \in [0,T]\}$ is a stationary energy solution of \eqref{SBE}, we need to find a process $\{\mc A_t; t \in [0,T]\}$ such that the pair $\{(\mc Y_t, \mc A_t); t \in [0,T]\}$ is a controlled process according to Definition \ref{def:cont_proc}. Let us now check  all the conditions of Definition \ref{def:cont_proc}.

We start with i). A simple computation, which was in fact done in Section \ref{tightness}, shows that $\{\mc Y_0^n(f)\}_{n \in \bb N}$ converges to a Gaussian random variable of mean $0$ and variance $\frac{1}{4}$. Therefore, $\mc Y_0$ is a white noise of variance $\frac{1}{4}$. By stationarity of the measure $\mu_{1/2}$, the same is true for $\mc Y_t$, for any $t \in [0,T]$. This shows i).

For ii), the idea now is to take  the limit $n'\to\infty$ in the equation  \eqref{martdec}. Without loss of generality, we can assume that each one of the processes
\begin{equation}
\label{processes_0}
\{\mc M_t^n(f); t \in [0,T]\}_{n \in \bb N},\; \{\mc A_t^n(f); t \in [0,T]\}_{n \in \bb N}, \; \{\textstyle{\int}_0^t \mc Y_s^n(\mc S_n f) ds; t \in [0,T]\}_{n \in \bb N}
\end{equation} 
converges, as $n'\to\infty$,  to some limiting process.  Let $\{\mc A_t(f); t \in [0,T]\}$ be the limit of $\{\mc A_t^{n'}(f); t \in [0,T]\}$. From Section \ref{tightness} we  know that $\mc M_t(f)$ given by
\[
\mc Y_t(f) - \mc Y_0(f) - \int_0^t \mc Y_s(\sigma^2 f'') ds - b \mc A_t(f)
\]
is a Brownian motion of variance $\frac{1}{4} \sigma^2t \| f'\|_{L^2(\bb R)}^2$. Therefore, the pair $\{(\mc Y_t, \mc A_t); t \in [0,T]\}$ satisfies condition ii) of the definition of controlled processes.

Now we prove condition iii). First we observe that  since $\{\mc R_t^{n'}(f); t \in [0,T]\}$ converges to $0$, as $n'\to\infty$,  $\mc A_t(f)$ is also the limit of $\hat{\mc A}_t^{n'}(f)$. Since the bound \eqref{3/4bound} is an upper bound which is uniform in $n$, it is preserved by convergence in law. Therefore,
\begin{equation}\label{energy_estimate}
\bb E\big[ \big( \mc A_t(f)-\mc A_s(f)\big)^2\big] \leq c(f) |t-s|^{3/2}.
\end{equation}
%
%
%from where we conclude that $\{\mc A_t(f); t \in [0,T]\}$ has zero quadratic variation and condition iii) of the definition of controlled processes. 
%This notion of solutions is actually equivalent to the one given in \cite{GubPer}, except that test functions are defined on $\bb R$ here, instead of the finite torus $\bb T$. More precisely, the energy estimate \eqref{eq:energy_estimate} implies that : 
From this, one can conclude that  the process $\{\mc A_t(H); t\in[0,T]\}$  is {\em a.s.~}of zero quadratic variation in the sense of Russo and Vallois \cite{RusVal}. Indeed, from \eqref{energy_estimate}, Fatou's lemma and Fubini's Theorem, we have that
\begin{align*}
\bb E \bigg[ \lim_{\varepsilon \to 0} \int_0^t \frac{1}{\varepsilon} \big(\mc A_{s+\varepsilon}(H)-\mc A_s(H)\big)^2\; ds  \bigg] & \leq \lim_{\varepsilon \to 0} c(f) T \sqrt {\varepsilon} = 0.
\end{align*}
This proves condition iii).
Condition iv) of the definition of controlled processes follows from the following observation: the law of the process $\{\eta_{T-t}^n; t \in [0,T]\}$ corresponds to an exclusion process with transition probability $p_n^\ast(z) = p_n^\ast(-z)$ and initial measure $\mu_{1/2}$.
Therefore, we can repeat the computations that we did above for the process $\{\eta_{T-t}^n; t \in [0,T]\}$, from where iv) follows. 
Putting all together, we have just proved that $\{(\mc Y_t, \mc A_t); t \in [0,T]\}$ is a controlled process. In particular,  from Proposition \ref{p2}, the limit 
\[
 \lim_{\epsilon \to 0} \int_0^t \int \big(\mc Y_s \ast \iota_\epsilon(x)\big)^2 f' (x) dxds
\]
exists. Taking the limit in \eqref{cuadraprox} along the subsequence $n'$ we see that
\[
\lim_{n' \to \infty} \mc A_t^{n',\epsilon}(f) = m \int_0^t \int \big(\mc Y_s \ast \iota_\epsilon(x)\big)^2 f'(x) dx ds.
\]
Therefore, on one hand
\[
\lim_{\epsilon \to 0} \lim_{n' \to \infty} \mc A_t^{n',\epsilon}(f) = m \mc B_t(f)
\quad \textrm{and}\quad 
\lim_{n' \to \infty} \hat{\mc A}_t^{n'}(f) = \mc A_t(f);
\]
and on the other hand
\[
\bb E_n\big[\big( \hat{\mc A}_t^n(f) - \mc A_t^{n,\epsilon}(f)\big)^2\big] \leq c(f) t \epsilon.
\]
In particular, the limits in $\epsilon$ and $n'$ are interchangeable, from where we conclude that
$\mc A_t(f) = m \mc B_t(f)
$ {\em a.s}
which implies that 
$\{(\mc Y_t, \mc A_t); t \in [0,T]\}$ is an energy solution of the stochastic Burgers equation \eqref{SBE}. This  proves Theorem \ref{t1}.

\section{Acknowledgments}
The authors thank the French Ministry of Education through the grant ANR (EDNHS).  P. G. thanks  FCT/Portugal for support through the project 
UID/MAT/04459/2013.

\appendix

\section{Discrete approximations}\label{ap1}

\begin{lemma}
\label{numer}
Let $f \in \mc S(\bb R)$. Then,
\begin{equation}
\begin{split}
(a)& \quad \lim_{n \to \infty} \sup_{x \in \bb Z} \Big|\mc S_n f \big(\tfrac{x}{n}\big) - \tfrac{1}{2}\sigma^2  f''\big( \tfrac{x}{n}\big) \Big| =0,\\
(b)&\quad \lim_{n \to \infty} \tfrac{1}{n} \sum_{x \in \bb Z} \Big|\mc S_n f \big(\tfrac{x}{n}\big) - \tfrac{1}{2}\sigma^2  f''\big( \tfrac{x}{n}\big) \Big| =0. 
\end{split}
\end{equation}
\end{lemma}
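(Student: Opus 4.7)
The plan is to reduce both statements to a single Taylor-based identity and then exploit hypothesis \eqref{H1} through two successive applications of dominated convergence. Using the symmetry $s(z)=s(-z)$ one first rewrites
\[
\mc S_n f(\tfrac{x}{n}) = \tfrac{n^2}{2}\sum_{z \in \bb Z} s(z)\bigl[f(\tfrac{x+z}{n})+f(\tfrac{x-z}{n})-2f(\tfrac{x}{n})\bigr].
\]
Taylor's formula with integral remainder applied to the symmetric second difference, combined with $\sigma^2 = \sum_z s(z) z^2$, then yields the key identity
\[
\mc S_n f(\tfrac{x}{n}) - \tfrac{1}{2}\sigma^2 f''(\tfrac{x}{n}) = \tfrac{1}{2}\sum_{z \in \bb Z} s(z) z^2 \int_0^1 (1-t)\, \Delta_n^{t,z} f''(\tfrac{x}{n})\, dt,
\]
where $\Delta_n^{t,z} g(y) := g(y+tz/n)+g(y-tz/n)-2g(y)$. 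A second Taylor expansion of $f''$ together with the trivial supremum bound yields the pointwise estimate $|\Delta_n^{t,z} f''(y)| \leq \min\{4\|f''\|_\infty,\,(tz/n)^2\|f^{(4)}\|_\infty\}$, both quantities finite because $f \in \mc S(\bb R)$.

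For (a), the right-hand side of the identity is bounded uniformly in $x$ by $\tfrac{1}{2}\sum_z s(z) z^2 \alpha_n(z)$, where $\alpha_n(z) := \int_0^1 (1-t)\min\{4\|f''\|_\infty,\,(tz/n)^2\|f^{(4)}\|_\infty\}\,dt$. For each fixed $z$, $\alpha_n(z) \to 0$ by dominated convergence inside the $t$-integral; at the same time $\alpha_n(z) \leq 2\|f''\|_\infty$, so the summand is dominated by $2 s(z) z^2 \|f''\|_\infty$, which is summable by \eqref{H1}. A second application of dominated convergence on the $z$-sum finishes (a).

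For (b), Fubini--Tonelli gives
\[
\tfrac{1}{n}\sum_{x \in \bb Z}\bigl|\mc S_n f(\tfrac{x}{n}) - \tfrac{1}{2}\sigma^2 f''(\tfrac{x}{n})\bigr| \leq \tfrac{1}{2}\sum_{z \in \bb Z} s(z) z^2 \int_0^1 (1-t)\, T_n(t,z)\,dt,
\]
with $T_n(t,z) := \tfrac{1}{n}\sum_x|\Delta_n^{t,z} f''(x/n)|$. I would establish (i) a uniform bound $T_n(t,z) \leq C$ and (ii) pointwise convergence $T_n(t,z) \to 0$ for each fixed pair $(t,z)$. For (i), Schwartz decay $|f''(y)| \leq C'(1+y^2)^{-1}$ implies, by comparison with a convergent series, that $\tfrac{1}{n}\sum_x|f''((x+c)/n)| \leq C$ uniformly in $c \in \bb R$ and $n \in \bb N$, whence (i) follows from the triangle inequality. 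For (ii), I would split the sum into a compact window $|x/n|\leq L$---on which the refined bound $(tz/n)^2\|f^{(4)}\|_\infty$ forces the contribution to zero---and a tail $|x/n|>L$, which is controlled uniformly in $n$ by Schwartz decay of $f''$. A last dominated convergence over $z$, with dominant $C s(z) z^2 \in \ell^1(\bb Z)$, completes (b).

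The main obstacle is that \eqref{H1} provides only a finite second moment of $s$, not a finite fourth moment, so a naive Taylor error estimate at fourth order would produce the unavailable factor $\sum_z s(z) z^4$. The dominated-convergence strategy above sidesteps this by using the sharp constant dominants $2\|f''\|_\infty$ in (a) and a uniform Riemann-sum bound in (b), both of which are compatible with the sharp second-moment hypothesis \eqref{H1}.
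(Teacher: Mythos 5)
Your proof is correct, and it takes a somewhat different route from the paper's. For part (a) the paper truncates the $z$-sum at $|z|=\epsilon n$, applies third-order Taylor (with $\|f'''\|_\infty$ and the inequality $|z|^3/n^3\leq \epsilon z^2/n^2$) on the inner range, uses crude supremum bounds together with $n^2\leq z^2/\epsilon^2$ on the outer range, and then sends $n\to\infty$ followed by $\epsilon\to 0$; you instead symmetrize to a second difference, write an exact integral-remainder identity, and run two dominated-convergence arguments with the dominant $Cs(z)z^2\|f''\|_\infty$. Both devices resolve the same obstruction you correctly identify --- only a second moment of $s(\cdot)$ is available, so a global fourth-order (or even third-order) error bound is forbidden --- the paper via the $\epsilon n$ cutoff, you via the $\min\{4\|f''\|_\infty,(tz/n)^2\|f^{(4)}\|_\infty\}$ dominant. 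For part (b) the divergence is larger: the paper establishes the pointwise decay $|\mc S_n f(x/n)|\leq C(f)/(1+(x/n)^2)$ (by splitting the $z$-sum at $|z|=|x|/2$ and using Schwartz decay of $f''$), which gives uniform summability of the tails in $x$ and lets (b) follow from (a); you interchange the $x$-sum with the $(z,t)$-integration via Tonelli and prove that the Riemann sums $T_n(t,z)$ vanish for each fixed $(t,z)$ while staying uniformly bounded. Your version of (b) avoids having to produce an explicit decay rate for $\mc S_nf$ itself, at the cost of the (routine) uniform-in-shift Riemann-sum bound $\tfrac1n\sum_x|f''((x+c)/n)|\leq C$; the paper's pointwise decay estimate is slightly more information than is strictly needed for (b), but it is reused in spirit in Lemma \ref{numer3}. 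Both arguments use exactly the hypotheses \eqref{H1} and $f\in\mc S(\bb R)$ and are complete.
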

\begin{proof}
We start by showing $(a)$. Recall \eqref{def_S} and, for $\epsilon >$ and $n\in\bb N$ split there the sum  into the sets $\{|z| \leq \epsilon n\}$ and $\{|z| > \epsilon n\}$.  Then,
\begin{equation}
\label{mandarina}
\begin{split}
\Big|\mc S_n f \big(\tfrac{x}{n}\big) - \tfrac{1}{2}\sigma^2  f''\big( \tfrac{x}{n}\big) \Big|
		&\leq\Big|\sum_{|z|\leq \epsilon n} n^2 s(z)\Big(f\big(\tfrac{x+z}{n}\big) -f\big( \tfrac{x}{n}\big)  -\tfrac{z^2}{2n^2}f''\big(\tfrac{x}{n}\big)\Big)\Big|\\
		&\quad +\Big|\sum_{|z|> \epsilon n} n^2 s(z)\Big(f\big(\tfrac{x+z}{n}\big) -f\big( \tfrac{x}{n}\big)  -\tfrac{z^2}{2n^2}f''\big(\tfrac{x}{n}\big)\Big)\Big|
\end{split}
\end{equation}
Using Taylor's formula, we have that 
\[
\Big| f\big(\tfrac{x+z}{n}\big) -f\big( \tfrac{x}{n}\big) - \tfrac{z}{n} f'\big(\tfrac{x}{n}\big) -\tfrac{z^2}{2n^2}f''\big(\tfrac{x}{n}\big)\Big| \leq \tfrac{z^3}{6 n^3} \|f'''\|_\infty.
\]  
The term $\frac{z}{n} f'(\frac{x}{n})$ can be added into the first sum in \eqref{mandarina} using the symmetry of $s(\cdot)$. Therefore, the first sum in \eqref{mandarina} is bounded above by $c(f)\sigma^2 \epsilon$.
In the second sum in \eqref{mandarina}, cancellations do not play a role. We get the bound
\[
\begin{split}
\Big|\sum_{|z|> \epsilon n} n^2 s(z)\Big(f\big(\tfrac{x+z}{n}\big) -f\big( \tfrac{x}{n}\big)  -\tfrac{z^2}{2n^2}f''\big(\tfrac{x}{n}\big)\Big)\Big|
		&\leq \sum_{|z| >\epsilon n} s(z) \big\{ 2n^2\|f\|_\infty + \tfrac{1}{2} \|f''\|_\infty z^2\big\}\\
		&\leq \big( \tfrac{2}{\epsilon^2} \|f\|_\infty + \tfrac{1}{2} \|f''\|_\infty \big) \sum_{|z| > \epsilon n} s(z) z^2.
\end{split}
\]
Putting all these estimates together we get the bound
\[
\begin{split}
\Big| \mc S_n f\big(\tfrac{x}{n}\big) - \tfrac{1}{2} \sigma^2 f''\big(\tfrac{x}{n}\big)\Big| 
		&\leq c(f)\sigma^2\epsilon+C\Big(\frac{\|f\|_{\infty}}{\epsilon} +\|f''\|_\infty\big)  \sum_{|z| > \epsilon n} z^2s(z).
\end{split}
\]
Taking $n \to \infty$ and then $\epsilon \to 0$, $(a)$ is proved. 

Now we prove $(b)$. Since we already know that $\mc S_nf(\frac{x}{n})-\frac{1}{2} \sigma^2 f''(\frac{x}{n})$ converges uniformly to $0$, as $n\to\infty$,  if we can show that the tails of $\mc S_n f(\frac{x}{n})$ are uniformly summable in $n$, $(b)$ follows from standard arguments. Fix $x \in \bb Z$. Since $s(\cdot)$ is symmetric we can write
\begin{equation}
\label{chirimoya}
n^2 \sum_{z \in \bb Z} s(z) \Big( f\big(\tfrac{x+z}{n}\big) -f\big(\tfrac{x}{n}\big)\Big) 
		= n^2 \sum_{z \in \bb Z} s(z) \Big( f\big(\tfrac{x+z}{n}\big) -f\big(\tfrac{x}{n}\big) - \tfrac{z}{n} f'\big(\tfrac{x}{n}\big)\mathbf{1}(|z| \leq \tfrac{1}{2}|x|)\Big).
\end{equation}
Now define the function $F:\bb R \to \bb R$ as
\[
F(u) = \sup_{|y-u| \leq u/2} \big| f''(y)\big|
\]
and split the sum on the right-hand side of \eqref{chirimoya}
into the sets $\{|z| \leq \frac{1}{2}|x|\}$ and $\{|z\ > \frac{1}{2} |x|\}$. 
In the set $\{|z| \leq \frac{1}{2} |x|\}$ we use the bound
\[
\big| f\big(\tfrac{x+z}{n}\big) - f\big( \tfrac{x}{n}\big) -\tfrac{z}{n} f'\big( \tfrac{x}{n}\big)\big| 
	\leq \tfrac{z^2}{2n^2} \!\!\!\sup_{|y-x| \leq \tfrac{1}{2}|x|} \!\!\!\big| f''\big(\tfrac{y}{n}\big)\big| \leq \tfrac{z^2}{2n^2}F\big(\tfrac{x}{n} \big)
\]
to get the bound
\[
n^2 \sum_{|z| \leq \frac{1}{2} x} s(z) \big| f\big(\tfrac{x+z}{n} \big) -f\big( \tfrac{x}{n}\big)\big| \leq \tfrac{1}{2} \sigma^2F\big(\tfrac{x}{n} \big).
\]
In the set $\{|z| > \tfrac{1}{2}|x|\}$ the points $\frac{x}{n}$ and $\frac{x+z}{n}$ are far away, so cancellations are not useful. Therefore, we bound the difference $|f(\frac{x+z}{n}) - f(\frac{x}{n})|$ by $2\|f\|_\infty$ to get the bound
\[
n^2 \sum_{|z| > \tfrac{1}{2}|x|} s(z) \big| f\big(\tfrac{x+z}{n}\big) - f\big( \tfrac{x}{n}\big)\big| 
		\leq \frac{8\|f\|_\infty \sigma^2}{|x/n|^2}.
\]
Since $f \in \mc S(\bb R)$, the function $F(u)$ decays to $0$ at infinity faster than any power of $u$. Moreover, we have also seen that $\mc S f (x/n)$ is uniformly bounded, from where we conclude  that  there is a constant $C(f)$ such that
$\big| \mc S_n  f\big(\tfrac{x}{n}\big)\big| \leq \frac{C(f)}{1+(x/n)^2}$. The limit in b) then follows from a) and standard truncation over sets of the form $\{|z| \leq Mn\}$.
\end{proof}

This lemma has the following consequence:

\begin{lemma}
\label{numer2}
Let $f \in \mc S(\bb R)$. Then,
\[
\lim_{n \to \infty} \tfrac{1}{n} \sum_{x,y \in \bb Z} s(y-x) \big( \nabla_{x,y}^n f \big)^2 = \sigma^2\| f'\|_{L^2(\bb R)}^2.
\]
\end{lemma}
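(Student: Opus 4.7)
My plan is to reduce Lemma \ref{numer2} to Lemma \ref{numer} via the standard \emph{carr\'e du champ} identity. For each $x\in\bb Z$, using that $\sum_y s(y-x)=1$ (since $s(\cdot)$ is a probability on $\bb Z$) together with the definition \eqref{def_S}, an elementary expansion gives
$$\sum_{y\in\bb Z} s(y-x)\,n^2\bigl(f(\tfrac{y}{n})-f(\tfrac{x}{n})\bigr)^2 \;=\; \mc S_n(f^2)\bigl(\tfrac{x}{n}\bigr)\,-\,2 f\bigl(\tfrac{x}{n}\bigr)\,\mc S_n f\bigl(\tfrac{x}{n}\bigr).$$
Summing in $x$ with the prefactor $1/n$, the quantity of interest becomes
$$\frac{1}{n}\sum_{x,y\in\bb Z} s(y-x)\bigl(\nabla_{x,y}^n f\bigr)^2 \;=\; \frac{1}{n}\sum_{x\in\bb Z}\mc S_n(f^2)\bigl(\tfrac{x}{n}\bigr) \;-\; \frac{2}{n}\sum_{x\in\bb Z} f\bigl(\tfrac{x}{n}\bigr)\,\mc S_n f\bigl(\tfrac{x}{n}\bigr).$$

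Next I pass to the limit $n\to\infty$ in each summand by invoking Lemma \ref{numer}. For the first term, part (b) of that lemma allows me to replace $\mc S_n(f^2)(x/n)$ by $\tfrac{1}{2}\sigma^2 (f^2)''(x/n)$ at the cost of an error that vanishes; the resulting Riemann sum converges to $\tfrac{\sigma^2}{2}\int_{\bb R}(f^2)''(x)\,dx=0$, since $f^2\in\mc S(\bb R)$. For the second term, the uniform bound $|f(x/n)|\le \|f\|_\infty$ combined with part (b) of Lemma \ref{numer} lets me replace $\mc S_n f(x/n)$ by $\tfrac{1}{2}\sigma^2 f''(x/n)$ up to a term of order $\|f\|_\infty \cdot \frac{1}{n}\sum_x |\mc S_n f(x/n) - \tfrac{1}{2}\sigma^2 f''(x/n)|\to 0$. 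What remains is the Riemann sum $\frac{\sigma^2}{n}\sum_x f(\tfrac{x}{n})f''(\tfrac{x}{n})$, which converges to $\sigma^2\int_\bb R f(x)f''(x)\,dx=-\sigma^2\|f'\|_{L^2(\bb R)}^2$ after an integration by parts. Collecting the two contributions yields the limit $0-\bigl(-\sigma^2\|f'\|_{L^2(\bb R)}^2\bigr)=\sigma^2\|f'\|_{L^2(\bb R)}^2$, as claimed.

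The only mildly technical point is ensuring that the Riemann sums $\frac{1}{n}\sum_x g(x/n)$ for $g\in\{(f^2)'',\,f f''\}$ genuinely converge to $\int_\bb R g(x)\,dx$; this is standard since both integrands are Schwartz, and the tail-truncation argument used in the proof of Lemma \ref{numer}(b) can be recycled verbatim to produce the necessary uniform control. I do not anticipate any further obstacle.
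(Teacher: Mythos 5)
Your proof is correct and follows essentially the same route as the paper: a discrete integration by parts reducing the claim to Lemma \ref{numer}, followed by a Riemann-sum and integration-by-parts computation. The only (harmless) difference is that the paper disposes of the term $\tfrac{1}{n}\sum_x \mc S_n(f^2)(\tfrac{x}{n})$ exactly — it vanishes identically by the symmetry of $s(\cdot)$, which is precisely how the identity $\tfrac{1}{n}\sum_x f(\tfrac{x}{n})\mc S_n f(\tfrac{x}{n}) = -\tfrac{1}{2n}\sum_{x,y} s(y-x)\big(\nabla_{x,y}^n f\big)^2$ used there arises — whereas you estimate it via Lemma \ref{numer}(b) applied to $f^2$ together with $\int_{\bb R}(f^2)''(x)\,dx=0$.
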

\begin{proof}
We start by observing that, by writing the sum in $\mc Sf(x/n)$ as twice its half and in one of the terms by making a change of variables and using  the symmetry of $s(\cdot)$, we get that
\[
\tfrac{1}{n} \sum_{x \in \bb Z} f\big(\tfrac{x}{n}\big) \big(\mc S_n f\big(\tfrac{x}{n}\big)\big)= 
-\tfrac{1}{2n} \sum_{x,z \in \bb Z} n^2s(z) \Big(f\Big(\frac{x+z}{n}\Big)-f\Big(\frac{x}{n}\Big)\Big) =-\tfrac{1}{2n} \sum_{x,y \in \bb Z} s(y-x) \big( \nabla_{x,y}^n f \big)^2.
\]
Now, to prove the result we do the following:
\begin{equation*}
\begin{split}
\tfrac{1}{n} \sum_{x,y \in \bb Z} s(y-x) \big( \nabla_{x,y}^n f \big)^2=\tfrac{2}{n} \sum_{x \in \bb Z} f\big(\tfrac{x}{n}\big) \big(-\mc S_n f\big(\tfrac{x}{n}\big)+\frac{1}{2}\sigma^2 f''\Big(\frac{x}{n}\Big)\big)-\frac{\sigma^2}{n}\sum_{x\in\mathbb{Z}}f\big(\tfrac{x}{n}\big)f''\Big(\frac{x}{n}\Big).
\end{split}
\end{equation*}
From $(a)$ in Lemma \ref{numer} the first  term on the right hand side of the previous expression vanishes as $n\to\infty$. The second term, by a summation by parts, converges, as $n\to\infty$,  to  $\sigma^2\| f'\|_{L^2(\bb R)}^2$, which finishes the proof.
\end{proof}

The computation used to prove part b) of Lemma \ref{numer} can also be used to prove the following lemma:

\begin{lemma}
\label{numer3}
Let $f \in \mc S(\bb R)$ and  recall \eqref{der_f}. 
Then,
\[
\lim_{n \to \infty} \tfrac{1}{n} \sum_{x \in \bb Z} \big( \widetilde{\nabla}_x^n f -m f'\big(\tfrac{x}{n}\big)\big)^2 =0.
\]
\end{lemma}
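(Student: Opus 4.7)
The plan is to mirror the strategy used in the proof of part (b) of Lemma \ref{numer}: first establish uniform pointwise convergence $\widetilde \nabla_x^n f \to m f'(x/n)$, then show that $|\widetilde \nabla_x^n f|$ admits a uniform-in-$n$ dominating envelope that is summable along the lattice $\frac{1}{n}\bb Z$, and finally combine both ingredients through a standard truncation argument.

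For the uniform convergence, I would write $m = 2\sum_{z>0} z\, a(z)$ using the antisymmetry of $a(\cdot)$ and \eqref{asy_average}, so that
\[
\widetilde \nabla_x^n f - m f'\big(\tfrac{x}{n}\big) = 2\sum_{z>0} a(z) \Big[\nabla_{x,x+z}^n f - z f'\big(\tfrac{x}{n}\big)\Big].
\]
Following the splitting used in Lemma \ref{numer}(a), I would break the sum at $\{|z| \leq \epsilon n\}$ and $\{|z| > \epsilon n\}$. For the inner range, Taylor's formula gives $|\nabla_{x,x+z}^n f - z f'(x/n)| \leq \tfrac{z^2}{2n}\|f''\|_\infty$, whose contribution is bounded by $\tfrac{\|f''\|_\infty}{n}\sum_{z>0} z^2 |a(z)|$; this vanishes as $n\to\infty$ by \eqref{H1}--\eqref{H2}. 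For the outer range, I would bound each increment crudely by $2n\|f\|_\infty + z\|f'\|_\infty$ and use $|a(z)| \leq z^2|a(z)|/(\epsilon n)^2$ to produce a bound of the form $C(f)\epsilon^{-2} n^{-1}\sum_{z>\epsilon n} z^2 |a(z)|$, which tends to $0$ as $n\to\infty$ for each fixed $\epsilon$ since the full series $\sum_z z^2|a(z)|$ is finite. Letting $\epsilon\to 0$ at the end yields $\sup_{x\in\bb Z}|\widetilde \nabla_x^n f - m f'(x/n)|\to 0$.

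For the dominating envelope, I would replicate the trick in the second half of the proof of Lemma \ref{numer}(b): split the sum over $z$ according to $\{|z| \leq |x|/2\}$ and $\{|z| > |x|/2\}$. On the first set, use Taylor expansion to degree two around $x/n$ together with the antisymmetry of $a$ to produce a bound by $|m f'(x/n)|$ plus a remainder controlled by $\tfrac{1}{n}F(x/n)\sum_{z>0} z^2|a(z)|$, where $F(u) = \sup_{|y-u|\leq |u|/2} |f''(y)|$ is Schwartz-decaying. On the second set, bound the increments by $2n\|f\|_\infty$ and use $|a(z)| \leq 4z^2|a(z)|/x^2$ so that the contribution is at most $C(f)/(x/n)^2$ times the tail of $\sum_z z^2|a(z)|$. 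The outcome is a pointwise bound $|\widetilde \nabla_x^n f| \leq C(f)/(1+(x/n)^2)$, uniform in $n$; since $f'$ is itself Schwartz, $|\widetilde \nabla_x^n f - m f'(x/n)|$ is then dominated by a single Schwartz-type envelope $g$ with $\tfrac{1}{n}\sum_x g(x/n)^2$ uniformly bounded in $n$.

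Once uniform convergence and uniform summability are in place, the conclusion is standard: for any $M>0$, split $\tfrac{1}{n}\sum_{x} = \tfrac{1}{n}\sum_{|x|\leq Mn} + \tfrac{1}{n}\sum_{|x|>Mn}$. The first sum is bounded by $(2M+1)\,\sup_x|\widetilde \nabla_x^n f - m f'(x/n)|^2 \to 0$ by the uniform convergence; the second is made arbitrarily small, uniformly in $n$, using the Schwartz envelope. The main obstacle is the tail estimate of the second step, since it requires carefully balancing the decay of $f$ against the integrability of $z^2|a(z)|$ in the regime where $|z|$ is comparable to $|x|$; this is precisely the step borrowed from the proof of Lemma \ref{numer}(b).
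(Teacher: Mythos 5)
Your proposal is correct and follows essentially the same route as the paper: the paper's proof of Lemma \ref{numer3} is exactly the computation of Lemma \ref{numer}(b) --- Taylor expansion, a cut of the $z$-sum at a scale comparable to $|x|$, and the finiteness of $\sum_z z^2|a(z)|$ combined with the Schwartz decay of $f$. The only (immaterial) difference is bookkeeping: the paper chooses the cutoff $L$ as a function of $x$ so as to obtain the single bound $\big|\widetilde\nabla_x^n f - m f'\big(\tfrac{x}{n}\big)\big|\leq \tfrac{C}{n}F\big(\tfrac{x}{n}\big)$ with $\tfrac{1}{n}\sum_x F\big(\tfrac{x}{n}\big)^2$ uniformly bounded, and then sums directly, whereas you keep the uniform $O(n^{-1})$ estimate and the spatially decaying envelope separate and combine them by truncating the $x$-sum at $Mn$.
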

\begin{proof}
Notice that
\[
\widetilde{\nabla}_x^n f -m f'\big(\tfrac{x}{n}\big) = 2n \sum_{z > 0} a(z) \Big(f\big(\tfrac{x+z}{n} \big) -f\big(\tfrac{x}{n}\big) -\tfrac{z}{n}f'\big(\tfrac{x}{n}\big)\Big).
\]
Fix $L \in \bb N$. Using Taylor's formula we see that
\[
\begin{split}
\Big|2n \sum_{z = 1}^L a(z) \Big(f\big(\tfrac{x+z}{n} \big) -f\big(\tfrac{x}{n}\big) -\tfrac{z}{n}f'\big(\tfrac{x}{n}\big)\Big)\Big| 
		&\leq 2n \sum_{z=1}^L  \frac{|a(z)|z^2}{n^2} \sup_{0 \leq y \leq L} \big| f''\big(\tfrac{x+y}{n}\big)\big| \\
		&\leq \frac{C}{n} \sup_{0 \leq y \leq L} \big| f''\big(\tfrac{x+y}{n}\big)\big|.
\end{split}
\]
On the other hand,
\[
\Big|2n \sum_{z > L} a(z) \Big(f\big(\tfrac{x+z}{n} \big) -f\big(\tfrac{x}{n}\big)\Big)\Big| 
		\leq 4 n \|f\|_\infty \sum_{z > L}  \frac{|a(z)| z^2}{L^2} \leq \frac{C(f) n}{L^2}
\]
and
\[
\Big|2n \sum_{z > L} a(z) \tfrac{z}{n} f'\big(\tfrac{x}{n}\big)\Big|  \leq 2\big|f'\big(\tfrac{x}{n}\big)\big| \sum_{z > L} \tfrac{z^2}{L} \leq \frac{Cf'\big(\tfrac{x}{n}\big)}{L}.
\]
Taking $L=n$ for $|x|\leq 2n$ and $L = \frac{x}{2}$ for $|x| \geq 2n$ we see that 
\[
\big| \widetilde{\nabla}_x^n f - m f'\big(\tfrac{x}{n}\big)\big| \leq \tfrac{C}{n} F\big(\tfrac{x}{n}\big),
\]
where
\[
F(u) = \min\big\{ (1+u^2)^{-1/2}, \sup_{|y| \leq \frac{1}{2}|u|} \big|F''(y)\big|\big\}.
\]
Since $f \in \mc S(\bb R)$, the sum
\[
\tfrac{1}{n} \sum_{x \in \bb Z} F\big(\tfrac{x}{n}\big)^2
\]
is uniformly bounded in $n$, which proves the lemma.
\end{proof}
\bibliographystyle{plain}

\end{document}